\documentclass[a4paper, 12pt]{article}
\usepackage[utf8]{inputenc}

\usepackage[T2A]{fontenc}    
\usepackage[left=2cm,right=2cm,
    top=2cm,bottom=2cm,bindingoffset=0cm]{geometry}

\usepackage[utf8]{inputenc}
\usepackage{amsfonts, amscd, amsmath, amsthm}

\usepackage{amscd,amsmath, amssymb, fancyhdr, epsfig}

\usepackage{amsmath,amsfonts,amssymb,amsthm} 
\usepackage{amsrefs}
\usepackage{colortbl}
\usepackage{color}

\usepackage[vcentermath]{youngtab}
\usepackage{multirow, multicol}
\usepackage{enumerate}
\usepackage{wrapfig}
\usepackage[all,cmtip]{xy}
\usepackage[T1]{fontenc}
\usepackage{bbold}
\usepackage{tikz}
\tikzstyle{V}=[draw, fill =black, circle, inner sep=0pt, minimum size=1.5pt]
\tikzstyle{bV}=[draw, fill =black, circle, inner sep=0pt, minimum size=4pt]
\tikzstyle{cV}=[draw, fill =white, circle, inner sep=0pt, minimum size=4pt]
\tikzstyle{over}=[draw=white,double=black,line width=2pt, double distance=.75pt]
\tikzstyle{diagram}=[line width=.75pt, scale=\SCALE]

\usepackage{euscript}	 
\usepackage{mathrsfs} 

\numberwithin{equation}{section}

\def\eqref#1{(\ref{#1})}
\newcommand{\goth}{\mathfrak}
\newcommand{\g}{{\mathfrak g}}
\newcommand{\gl}{{\mathfrak {gl}}}

\def\1{\sqrt{-1}\:}

\newcommand{\cntrct}                
{\hspace{2pt}\raisebox{1pt}{\text{$\lrcorner$}}\hspace{2pt}}


\renewcommand{\tilde}{\widetilde}

\renewcommand{\phi}{\varphi}
\renewcommand{\epsilon}{\varepsilon}


\newcommand{\End}{\operatorname{End}}
\newcommand{\Rep}{\operatorname{Rep}}

\newcommand{\Res}{\operatorname{Res}}
\newcommand{\Hom}{\operatorname{Hom}}


\newcounter{Mycounter}[section]
\newcounter{lemma}[section]
\setcounter{lemma}{0}

\newcounter{claim}[section]
\setcounter{claim}{0}

\newcounter{sublemma}[section]
\setcounter{sublemma}{0}

\newcounter{corollary}[section]
\setcounter{corollary}{0}

\newcounter{theorem}[section]
\setcounter{theorem}{0}

\newcounter{conjecture}[section]
\setcounter{conjecture}{0}

\newcounter{definition}[section]
\setcounter{definition}{0}

\newcounter{example}[section]
\setcounter{example}{0}

\newcounter{remark}[section]
\setcounter{remark}{0}

\newcounter{problem}[section]
\setcounter{problem}{0}

\newcounter{question}[section]
\setcounter{question}{0}

\def\blacksquare{\hbox{\vrule width 5pt height 5pt depth 0pt}}

\begin{document}

\begin{center}
    \textbf{\large{Olshanski's Centralizer Construction and Deligne Tensor Categories}}
\end{center}

\begin{center}
    Alexandra Utiralova
\end{center}

\begin{abstract}
The family of Deligne tensor categories $\Rep(GL_t)$ is obtained from the categories $\mathbf{Rep}~GL(n)$ of finite dimensional representations of groups $GL(n)$ by interpolating the integer parameter $n$ to complex values. Therefore, it is a valuable tool for generalizing classical statements of representation theory. In this work we introduce and prove the generalization of Olshanski's centralizer construction of the Yangian  $Y(\gl_n)$. Namely, we prove that for generic $t\in\mathbb{C}$ the centralizer subalgebra of $GL_t$-invariants in the universal enveloping algebra $U(\mathfrak{gl_{t+n}})$ is the tensor product of $Y(\mathfrak{gl}_n)$ and the center of $U(\mathfrak{gl_{t}})$. The main feature of this construction is that it does not involve passing to a limit, contrary to the original construction of Olshanski.
\end{abstract}

\section{Introduction.}

~~~~Categories of finite dimensional representations of classical groups such as  $GL(n), O(n), Sp(n), \\S_n$, form  families of categories algebraically depending on a parameter $n$. The first definition for interpolation of these categories to complex values of $n$ for the group $GL(n)$ -- the family of categories $\Rep(GL_t)$ -- was given in the classical text by Deligne and Milne \textit{Tannakian categories} \cite{DM}. In his further works \cite{D2}, \cite{D3} Deligne developed this construction: he showed, for instance, that these categories are semisimple for all non-integer $t$ and proved the universal property which will be discussed below. He also defined interpolation categories for other classical groups: $\Rep(S_t)$, $\Rep(O_t), \Rep(Sp_t)$  \cite{D1}, that are known now together with $\Rep(GL_t)$ as Deligne  categories. Like in any symmetric tensor category, there are well defined notions of associative algebras, Lie algebras, Hopf algebras, etc. in Deligne categories. Such algebras generalize the notion of algebras with an action of a group, some examples of which for the group $GL(n)$ are the universal enveloping algebra $U(\gl_n)$ or the Yangian $Y(\gl_n)$.

We are interested in the categories $\Rep(GL_t)$. For these we will introduce the analogues of the Yangian and the universal enveloping algebra and will state and prove a generalization of the centralizer construction of the Yangian originally proved by Grigori Olshanski in \cite{O1}. Namely, the centralizer construction describes the tensor product $Y(\mathfrak{gl}_n)\otimes A_0$ (where $A_0=\mathbb{C}[x_1,x_2,\ldots]$ is the free polynomial algebra with the generators of degrees $1,2,3,\ldots$) as the limit of the centralizer subalgebras $U(\mathfrak{gl}_{N+n})^{GL_N}$ as $N\to\infty$. A justification and detailed description of the aforementioned construction could be found, for example, in the book by A. Molev \textit{Yangians and classical Lie algebras} \cite{M}. We prove the following

\noindent{\bf Main Theorem.}  \emph{For any transcendental $t\in\mathbb{C}$ we have $Res_{GL_{t}}^{GL_{t+n}}U(\mathfrak{gl}_{t+n})^{GL_t}\simeq Y(\mathfrak{gl}_n)\otimes A_0$.} 

Similar statement was proved by Molev and Olshanski for twisted Yangians in \cite{MO}. We believe that this result can also be generalized for Deligne categories $\Rep(O_t), \Rep(Sp_t)$.

\subsection*{The paper is organized as follows.} In Section~2 we recall the definitions and main properties of Deligne categories and Yangians and define Yangian of $\mathfrak{gl}_t$. In Section~3 we give a precise formulation of the Main Theorem. In Section~4 we prove the Main Theorem. The Appendix (Section~5) contains technical results on the abelian envelope of the Deligne category $\Rep(GL_t)$ used in the present paper.

\subsection*{Acknowledgements.}

I want to thank Leonid Rybnikov for suggesting this problem to me and for all the discussions we had about it and Pavel Etingof for all the valuable advice.

\section{\large{Some definitions and preliminaries.}} 

~~~~It is well known that the group $GL(n)$ has the fundamental irreducible  $n-$dimensional representation $V$ that is faithful, and therefore, all other irreducible finite dimensional representations can be realized as sub- and factor-modules of representations of the form $V^{\otimes k}\otimes V^{*\otimes l}$. Now, knowing the fundamental theorem of invariant theory and Schur-Weyl duality, we can easily describe morphisms between the representations of this kind. It motivates the following definition:

\begin{definition} Define the rigid additive symmetric tensor category $\widetilde{\Rep}(GL_t)$, generated by a single object of dimension $t$. That is, the category with objects of the form $[k,l]:=V^{\otimes k}\otimes V^{*\otimes l}$ and all possible finite direct sums of such objects. The generating object $V$ would be an analogue of the fundamental representation of the group $GL(n)$, therefore, similarly to the classical case we require: 
$$
\Hom(V^{\otimes k}\otimes V^{*\otimes l}, V^{\otimes p}\otimes V^{*\otimes q}) = 0,
$$if $k+q \neq l+p$. In the case when $k+q=l+p=n$, $$\Hom(V^{\otimes k}\otimes V^{*\otimes l}, V^{\otimes p}\otimes V^{*\otimes q})\simeq \Hom(V^{\otimes n}, V^{\otimes n})$$ 
is isomorphic to $\mathbb CS_n$ with the standard action of $S_n$ on $V^{\otimes n}$.

By "of dimension $t$" we mean that the composition of coevaluation and evaluation
$$
\begin{CD}
\mathbb{1}@>\mathrm{coev}>>V\otimes V^* @>\mathrm{ev}>>\mathbb{1}
\end{CD}
$$
is the multiplication by $t$.

Morphisms between objects in $\widetilde{\Rep}(GL_t)$ are linear combinations of diagrams of the following kind:

\begin{tikzpicture}[line width=1pt, scale=.75]
	\foreach\x in {1,...,5}{
		\node[bV] (t\x) at (\x,1){};}
	\foreach\x in {1,...,4}{
		\node[bV] (b\x) at (\x,0){};}
	\foreach\x in {6,...,8}{
	    \node[cV] (t\x) at (\x,1){};}
	\foreach\x in {5,...,6}{
	    \node[cV] (b\x) at (\x,0){};}
	\draw [bend left] (t8) to (t4) (b3) to (b5) (t7) to (t3);
	\draw (b1)--(t5) (b2)--(t2) (b4)--(t1) (b6)--(t6);
	
\end{tikzpicture}

Here, $\bullet$ stands for $V$ and $\circ$ stands for $V^*$; vertical straight lines going from top to bottom are the identity morphisms; curved lines at the top row $\bullet\smile\circ$ stand for the evaluation, at the bottom row - $\bullet\frown \circ$ - coevaluation. In our case it represents the map $[5,3]\to [4,2]$.

To compose two such morphisms one should place the second diagram below (Pic.1) and compose the edges, then delete every cycle, keeping in mind that it is a multiplication by $t$ (Pic.2):  
$$
$$

\begin{tikzpicture}[line width=1pt, scale=.75]
	\foreach\x in {1,...,5}{
		\node[bV] (t\x) at (\x,1){};}
	\foreach\x in {1,...,4}{
		\node[bV] (b\x) at (\x,0){};}
	\foreach\x in {6,...,8}{
	    \node[cV] (t\x) at (\x,1){};}
	\foreach\x in {5,...,6}{
	    \node[cV] (b\x) at (\x,0){};}
	\draw [bend left] (t8) to (t4) (b3) to (b5) (t7) to (t3);
	\draw (b1)--(t5) (b2)--(t2) (b4)--(t1) (b6)--(t6);
\end{tikzpicture}

\begin{tikzpicture}[line width=1pt, scale=.75]
	\foreach\x in {1,...,4}{
		\node[bV] (t\x) at (\x,1){};}
	\foreach\x in {1,...,3}{
		\node[bV] (b\x) at (\x,-1){};}
	\foreach\x in {4}{
	    \node[cV] (b\x) at (\x,-1){};}
	\foreach\x in {5,...,6}{
	    \node[cV] (t\x) at (\x,1){};}

	\draw [bend left] (t5) to (t3);
	\draw (b2)--(t1) (b4)--(t6) (b1)--(t4) (b3)--(t2);
	\node at (4,-2) {Pic.1};
\end{tikzpicture}

\begin{tikzpicture}[line width=1pt, scale=.75]
	\foreach\x in {1,...,5}{
		\node[bV] (t\x) at (\x,1){};}
	\foreach\x in {6,...,8}{
	    \node[cV] (t\x) at (\x,1){};}
	\foreach\x in {1,...,3}{
		\node[bV] (b\x) at (\x,-2){};}
	\foreach\x in {4}{
	    \node[cV] (b\x) at (\x,-2){};}
	\draw [bend left] (t8) to (t4) (t7) to (t3);
	\draw (b3)--(t2) (b2)--(t5) (b1)--(t1) (b4)--(t6);
    \node[left] at (.5,-.7) {t};
    \node at (4,-3) {Pic.2};
\end{tikzpicture}

\textit{\underline{Note:}} The algebra $\End([k,l])$ is known as the Walled Brauer Algebra $B_{k,l}(t)$. 
$$
$$

\textit{The Deligne category $\Rep(GL_t)$} is the Karoubi envelope of the category $\widetilde{\Rep}(GL_t)$.
\end{definition}
\vspace{.1cm}

This category is semisimple (and hence abelian) for $t\notin \mathbb Z$ and has the following universal property: \textit{if $\mathcal C$ is a rigid symmetric additive tensor category then isomorphism classes of additive symmetric monoidal functors  $\Rep(GL_t)\to\mathcal C$ are in bijection with isomorphism classes of objects of dimension $t$ in $\mathcal C$. It means that any such functor is defined up to an isomorphism by the image of $V$.}\cite{E2}

Using this property we can define the functors
\begin{enumerate}
    \item[$\bullet$] $\Res_n: \Rep(GL_{t+n}) \to \Rep(GL_t), \Res_n(V_{(t+n)})=\mathbb{1}^n\oplus V_{(t)};$
    
    \item[$\bullet$] $ \Res_{s,t}: \Rep(GL_{t+s}) \to \Rep(GL_t)\boxtimes \Rep(GL_s),$ $ \Res_{s,t}(V_{(t+s)})= V_{(s)}\boxtimes \mathbb{1}\bigoplus \mathbb{1}\boxtimes V_{(t)};$
    
    \item[$\bullet$] $Ev_n: \Rep(GL_{t=n}) \to \mathbf{Rep}~GL(n), Ev_n(V) = \mathbb C^n.$
\end{enumerate}

\vspace{.1cm}
\begin{definition} Let $\Rep(GL_t)^{ab}$ be the abelian envelope of $\Rep(GL_t)$. It was shown  in \cite{H}, that it exists and is also a symmetric tensor category.
\end{definition}

\vspace{.1cm}

\begin{lemma} Let $\mathcal C$ be an abelian symmetric tensor category over a field of characteristic zero, $V\in Ob~\mathcal C$ and $A$ a filtered algebra in $\mathrm{Ind}(\mathcal C) - $ the Ind-completion of the category $\mathcal C$  generated by $V$ in degree 1 such that $gr~A=SV$. Let $\mathcal D$ be the Karoubian subcategory in $\mathcal C$ generated by tensor powers of $V$. Then for each $i$ the object $F^iA$ is in $\mathcal D$.
\end{lemma}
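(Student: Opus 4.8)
The plan is to prove, by induction on $i$, that the filtration of $A$ splits in degree $i$, i.e.\ that the canonical epimorphism $F^iA\twoheadrightarrow gr_iA$ admits a section; granting this in every degree one obtains $F^iA\cong\bigoplus_{j=0}^{i} gr_jA\cong\bigoplus_{j=0}^{i}S^jV$ via the hypothesis $gr\,A\cong SV$, and it then remains only to see that the right-hand side lies in $\mathcal D$. For the latter, recall that over a field of characteristic zero the symmetrizer $e_j=\tfrac1{j!}\sum_{\sigma\in S_j}\sigma\in\End(V^{\otimes j})$ is an idempotent whose image is $S^jV$, so each $S^jV$ is a direct summand of $V^{\otimes j}$; since $\mathcal D$ is an additive Karoubian subcategory containing all tensor powers of $V$ (in particular $\mathbb 1=V^{\otimes 0}$), it contains every $S^jV$ and every finite direct sum of these. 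The base case $i=0$ is just $F^0A=\mathbb 1\in\mathcal D$.

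For the inductive step, assume $F^{i-1}A\in\mathcal D$ and consider the short exact sequence
\[
0\longrightarrow F^{i-1}A\longrightarrow F^iA\xrightarrow{\ p\ }gr_iA\longrightarrow 0
\]
in $\mathrm{Ind}(\mathcal C)$. It suffices to produce a section of $p$, for then $F^iA\cong F^{i-1}A\oplus gr_iA\cong F^{i-1}A\oplus S^iV\in\mathcal D$. Let $m_i\colon V^{\otimes i}\to F^iA$ be the $i$-fold product of the degree-one generators, that is the composite of $\iota^{\otimes i}$ with the multiplication of $A$, where $\iota\colon V\to F^1A$ realizes the generators. Since $A$ is generated by $V$ in degree $1$ and $gr\,A\cong SV$ \emph{as graded algebras}, $\iota$ induces an isomorphism $V\xrightarrow{\ \sim\ }gr_1A$, and the degree-$i$ component of the multiplication of $gr\,A$ becomes, under these identifications, the multiplication $V^{\otimes i}\to S^iV$ of the symmetric algebra, i.e.\ the canonical projection $\pi_i$. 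Hence $p\circ m_i$ equals $\pi_i$ composed with the isomorphism $S^iV\cong gr_iA$ and precomposed with an automorphism of $V^{\otimes i}$; in particular it is a split epimorphism, because $\pi_i$ is split by the corestriction of $e_i$. Choosing $t\colon gr_iA\to V^{\otimes i}$ with $p\circ m_i\circ t=\mathrm{id}$, the morphism $m_i\circ t$ is the required section of $p$. This closes the induction, and incidentally shows that $F^iA$ is an object of $\mathcal C$, so nothing about the ambient Ind-category is needed beyond writing down $m_i$ and $t$.

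The only genuinely delicate point is the identification of $p\circ m_i$ with the canonical symmetrization $\pi_i$: this is exactly where the hypothesis must be used as an isomorphism of \emph{algebras} — that $gr\,A$ is commutative and generated in degree one — and not merely as a degreewise isomorphism of objects, and it requires keeping track of how the identifications $V\cong gr_1A$ and $S^iV\cong gr_iA$ interact with the multiplication, though no actual computation. Everything else is formal: the existence of the idempotents $e_j$ (hence of the summands $S^jV\subseteq V^{\otimes j}$ and of the section of $\pi_i$) uses only that the ground field has characteristic zero, and the closure of $\mathcal D$ under direct summands and finite direct sums is built into the word ``Karoubian''. Note, in particular, that semisimplicity of $\mathcal C$ is never invoked: the splitting is produced explicitly from the algebra structure rather than by an abstract homological argument.
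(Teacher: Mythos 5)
Your proof is correct and follows essentially the same route as the paper's: the paper passes to the surjection $TV\to A$, observes (via the characteristic-zero symmetrizer) that the associated graded of its kernel $I$ is complemented by $SV$ inside $TV$, and deduces $TV\cong I\oplus SV$, hence $A\cong SV$ as a filtered Ind-object, so $F^iA\cong\bigoplus_{j\le i}S^jV\in\mathcal D$. Your degree-by-degree induction producing a section of $F^iA\twoheadrightarrow gr_iA$ is a local unwinding of that same global splitting argument, with the extra care you note about tracking the identification $V\cong gr_1A$ being exactly the terse step "$TV=I_0\oplus SV$, hence $TV=I\oplus SV$" in the paper.
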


\begin{proof}

We have a surjective map $TV\to A$, let $I$ be its kernel. And let $I_0=gr~I$. Then $TV=I_0\oplus SV$, hence, $TV=I\oplus SV$. Therefore, $A=TV/I$ is isomorphic to $SV$ as a filtered Ind-object. 
\end{proof}

\vspace{.1cm}

\begin{definition} The algebra $\gl_t = V^*\otimes V$ - is an algebra in the category $\Rep(GL_t)$ with a multiplication $m$:

\xymatrix{
&(V^*\otimes V)\otimes (V^*\otimes V) \ar[rr]^-{m = \mathrm{ev}_{23}} &&V^*\otimes V 
}

\begin{tikzpicture}[line width=1pt, scale=.75]
	\foreach\x in {2,4}{
		\node[bV] (t\x) at (\x,1){};}
	\foreach\x in {1,3}{
		\node[cV] (t\x) at (\x,1){};}
	\foreach\x in {3}{
	    \node[bV] (b\x) at (\x,-1){};}
	\foreach\x in {2}{
	    \node[cV] (b\x) at (\x,-1){};}

	\draw [bend left] (t3) to (t2);
	\draw (b3)--(t4) (b2)--(t1);
	\node at (.3,.2) {$m:$};
\end{tikzpicture}

Now we can define the Lie algebra $\goth{gl}_t$ (or just $\g)$ as the Lie algebra \\$V^*\otimes V$ in the category $\Rep(GL_t)$ with a commutator $c$:

\xymatrix{
&(V^*\otimes V)\otimes (V^*\otimes V) \ar@{.>}[rd] \ar[rr]^-{c = m - m\circ P} &&V^*\otimes V \\
&&\Lambda^2(V^*\otimes V)\ar@{.>}[ru]} 

\noindent where $P$ is the permutation of the factors of the form $(V^*\otimes V)$.

\vspace{.1cm}

For any object $W\in \mathrm{Ind}(\Rep(GL_t))$ the tensor algebra $TW$ is a well defined Ind-object in our category isomorphic to
$$
\mathbb{1}\oplus W \oplus (W\otimes W) \oplus (W\otimes W\otimes W) \oplus...
$$
with standard multiplication.

Define the universal enveloping algebra $U(\g)$  to be a quotient of the tensor algebra:
$$
U(\g) := T(\g)/(f(\g\otimes\g)),
$$
where $f = i_2 - i_2P - i_1c$, where $i_k:\g^{\otimes k}\to T(\g)$ is just the inclusion of the k-th graded component.

A priori $U(\g)$ is a filtered algebra in $\mathrm{Ind}(\Rep(GL_t)^{ab})$ (the filtration on $U(\g)$ is inherited from the grading on the tensor algebra). But  Poincaré–Birkhoff–Witt (PBW) theorem holds for Lie algebras in any symmetric tensor category over a field of characteristics zero, therefore, the conditions of Lemma 2.3. are satisfied for $A=U(\g)$. Hence, $U(\g)$ is indeed a well-defined Ind-object in $\Rep(GL_t)$ and $\mathrm{gr}~U(\g)$ is isomorphic to the symmetric algebra $S(\g)$. 

It follows that the invariants in both algebras coincide: $\Hom(\mathbb{1}, U(\g))=\Hom(\mathbb{1},S(\g))$  and they are isomorphic to $A_0$, where $A_0 = \mathbb C[x_1,x_2,\ldots]$, where each $x_k$ has degree $k$ (which was proved in \cite{E2}).

\end{definition}

 Now we will define the Yangian $Y(\gl_t)$ as an algebra in the category $\mathrm{Ind}(\Rep(GL_t))$ as follows.  Let $Y^{gen} = \bigoplus_{i\in\mathbb{Z}_{>0}}(V^*\otimes V)_i$ be the space of generators of this algebra and let $\widetilde{Y} = T(Y^{gen})$.

Define the map $a_0:\mathbb{1}\to V\otimes V^*$ and the maps $a_i:\mathbb{1}\to (V^*\otimes V)_i\otimes (V\otimes V^*)$ as follows: $a_0=\mathrm{coev}$ and $a_i=\mathrm{coev}_{13}\mathrm{coev}_{24}$.

\begin{tikzpicture}[line width=1pt, scale=.75]
	\foreach\x in {2,3}{
		\node[bV] (t\x) at (\x,1){};}
	\foreach\x in {1,4}{
	    \node[cV] (t\x) at (\x,1){};}
	\draw [bend left] (t1) to (t3) (t2) to (t4);
    \node[left] at (0,1) {$a_i:$};
    \node at (.7,1){$($};
    \node at (2.4,1){$)_i$};
\end{tikzpicture}

Now we can define $\tilde T(u)\in \Hom(\mathbb{1},\tilde Y\otimes (V\otimes V^*))[[u^{-1}]]$  as $\tilde T(u)=\sum a_iu^{-i}$. Note that since $\tilde Y$ and $V\otimes V^*$ are algebras in our category (clearly, $V\otimes V^*$ is isomorphic to $\gl_t^{op}$), the vector space $\tilde B:=\Hom(\mathbb{1},\tilde Y\otimes(V\otimes V^*))$ is an ordinary associative algebra.

Remembering the classical RTT-relation for the Yangian, define the $R-$ matrix in \\$\Hom(\mathbb{1},(V\otimes V^*)\otimes(V\otimes V^*))[[u^{-1}]]$ as $R(u)=\mathrm{coev}_{12}\mathrm{coev}_{34}-u^{-1}\mathrm{coev}_{14}\mathrm{coev}_{23}=1-u^{-1}P$. 

\begin{tikzpicture}[line width=1pt, scale=.75]
	\foreach\x in {1,3,7,9}{
		\node[bV] (t\x) at (\x,1){};}
	\foreach\x in {2,4,8,10}{
		\node[cV] (t\x) at (\x,1){};}
    \node at (5,1) {$-$};
    \node at (6,1) {$u^{-1}$};
	\draw [bend left] (t1) to (t2) (t3) to (t4) (t7) to (t10) (t8) to (t9);
	\node at (-.5,1) {$R(u)=$};
\end{tikzpicture}

Let $\tilde A$ be an algebra $\Hom(\mathbb{1},\tilde Y\otimes (V\otimes V^*)\otimes (V\otimes V^*))[[u^{-1},v^{-1}]]$. We can give the embedding of the elements we have just defined into $\tilde A$:

\xymatrix{
\widetilde{T}(u)_1:\mathbb{1}\ar[rrrrr]^-{\widetilde{T}(u)_{12}\otimes \mathrm{coev}_3}&&&&& \underset{1}{\tilde Y}\otimes\underset{2}{(V\otimes V^*)}\otimes\underset{3}{(V\otimes V^*)},\\
\widetilde{T}(v)_2:\mathbb{1}\ar[rrrrr]^-{\widetilde{T}(v)_{13}\otimes \mathrm{coev}_2}&&&&& \underset{1}{\tilde Y}\otimes\underset{2}{(V\otimes V^*)}\otimes\underset{3}{(V\otimes V^*)},\\
{R}(u-v):\mathbb{1}\ar[rrrrr]^-{{R}(u-v)_{23}\otimes (\mathbf{1}_{\tilde Y})_1}&&&&& \underset{1}{\tilde Y}\otimes\underset{2}{(V\otimes V^*)}\otimes\underset{3}{(V\otimes V^*)},
}

(with two different embeddings of $\tilde T(u)$).

I.e. we have defined $(a_i)_1$ and $(a_i)_2$ for each $i$:

\begin{tikzpicture}[line width=1pt, scale=.75]
	\foreach\x in {2,3,5}{
		\node[bV] (t\x) at (\x,1){};}
	\foreach\x in {1,4,6}{
	    \node[cV] (t\x) at (\x,1){};}
	\draw [bend left] (t1) to (t3) (t2) to (t4) (t5) to (t6);
    \node[left] at (0,1) {$(a_i)_1:$};
    \node at (.7,1){$($};
    \node at (2.4,1){$)_i$};
    \foreach\x in {2,3,5}{
		\node[bV] (b\x) at (\x,-1){};}
	\foreach\x in {1,4,6}{
	    \node[cV] (b\x) at (\x,-1){};}
	\draw [bend left] (b1) to (b5) (b2) to (b6) (b3) to (b4);
    \node[left] at (0,-1) {$(a_i)_2:$};
    \node at (.7,-1){$($};
    \node at (2.4,-1){$)_i$};
\end{tikzpicture}

Now we are ready to define the Yangian $Y(\gl_t)$, which we will refer to as $Y_t$ or just $Y$ when there is no ambiguity.

\begin{definition} Consider the evaluation $\epsilon=\mathrm{ev}_{12}:\tilde Y\otimes(\underset{1}{V\otimes V^*})^{\otimes 2}\otimes \underset{2}{(V^*\otimes V})^{\otimes 2}\to \tilde Y$. \\And consider a map $f= R(u-v)\tilde T(u)_1\tilde T(v)_2-\tilde T(v)_2\tilde T(u)_1 R(u-v): \mathbb{1}\to \tilde Y\otimes(V\otimes V^*)^{\otimes 2}$. Let $I\subset \tilde Y$ be an ideal generated by the image of the following composition:

\xymatrix{
(V^*\otimes V)^{\otimes 2}\ar[r]^-{f\otimes id} & \tilde Y\otimes (V\otimes V^*)^{\otimes 2}\otimes(V^*\otimes V)^{\otimes 2}\ar[r]^-\epsilon&\tilde Y 
}

By Theorem 5.2 (see the Appendix), the conditions of Lemma 2.3 hold for $\tilde Y/I$ inside $\mathrm{Ind}(\Rep(GL_t)^{ab})$. Therefore, $Y=\tilde Y/I$ is a well-defined Ind-object of $\Rep(GL_t)$. 

\end{definition}
\vspace{.1cm}

Let $T(u)$ be the composition of $\tilde T(u)$ with the projection $\tilde Y\otimes(V\otimes V^*)\to Y\otimes(V\otimes V^*)$. It is an element of the algebra $ B[[u^{-1}]]:=\Hom(\mathbb{1},Y\otimes (V\otimes V^*))[[u^{-1}]]$. Its zero term $a_0=\mathbf{1}_Y\otimes \mathrm{coev}_{23}$ is the unit of the algebra $B:=\Hom(\mathbb{1},Y\otimes(V\otimes V^*))$. Therefore, $T(u)$ is invertible.

Let $\tau: \Hom(\mathbb{1},X\otimes (V\otimes V^*))\to \Hom(V^*\otimes V,X)$ for any object $X$ be the natural isomorphism. Note that $\tau(a_i):V^*\otimes V\to (V^*\otimes V)_i$ is the identity map for $i>0$ and $\tau(a_0):V^*\otimes V\to\mathbb{1}$ is the evaluation. 

Let $\tilde T'(u)\in \tilde B[[u^{-1}]]$ be any invertible element, $\tilde T'(u)=\sum b_iu^{-i}$. Then $\tilde T'(u)$ defines an automorphism (which we denote $\tilde T(u)\mapsto \tilde T'(u)$) of $\tilde Y$, which is defined on generators as follows: $\tau(b_i):(V^*\otimes V)_i=(V^*\otimes V)\to \tilde Y$ (basically, each $b_i$ determines where the $i'$th generator goes). It is easy to see that $\tilde T(u)$ defines the identity morphism on $\tilde Y$.

Now if we want an invertible element $T'(u)\in B[[u]]$ to define an automorphism of $Y$, we should require that $R(u-v)T'(u)_1T'(v)_2=T'(v)_2T'(u)_1R(u-v)$ inside \\ $A:=\Hom(\mathbb{1},Y\otimes(V\otimes V^*)^{\otimes 2})$. 

Similarly, $T'(u)$ defines an anti-automorphism of $Y$ if $R(u-v)T'_2(v)T'_1(u)=T'_1(u)T'_2(v)R(u-v)$. 

\vspace{.2cm}

\begin{lemma} The shift $T(u)\mapsto T(u+s)$ is an automorphism of $Y$ for any $s\in \mathbb C$. The maps $T(u)\mapsto T(-u)$ and $T(u)\mapsto T(u)^{-1}$ are anti-automorphisms.
\end{lemma}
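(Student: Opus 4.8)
The plan is to verify each of the three claims by checking the defining relations from the paragraph preceding Lemma~2.7, i.e.\ that a candidate $T'(u)$ satisfies the ternary RTT-relation in $A=\Hom(\mathbb 1, Y\otimes(V\otimes V^*)^{\otimes 2})$ (for an automorphism) or its opposite version (for an anti-automorphism). For the shift $T(u)\mapsto T(u+s)$, first I would observe that $T(u+s)$ is again an invertible element of $B[[u^{-1}]]$ (its zero term is still $a_0$, the unit, since $s$ only enters through powers $u^{-i}$ with $i\geq 1$ after expanding $(u+s)^{-i}$). Then the key point is that the $R$-matrix depends only on the difference of spectral parameters: $R(u-v)=R((u+s)-(v+s))$. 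Hence the relation $R(u-v)T(u)_1T(v)_2 = T(v)_2 T(u)_1 R(u-v)$, which holds in $Y$ by the very definition of the ideal $I$ (Definition~2.6), transforms under $u\mapsto u+s$, $v\mapsto v+s$ into $R(u-v)T(u+s)_1T(v+s)_2 = T(v+s)_2T(u+s)_1 R(u-v)$, which is exactly the automorphism condition for $T'(u)=T(u+s)$.

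For $T(u)\mapsto T(-u)$: again $T(-u)$ is invertible with the same unit zero term. Substituting $u\mapsto -u$, $v\mapsto -v$ in the RTT-relation gives $R(v-u)T(-u)_1T(-v)_2 = T(-v)_2T(-u)_1 R(v-u)$. Now I would use that $R(-w)=1+w^{-1}P$ and the identity $P R(w) P = R(w)$ (both $1$ and $P$ commute with $P$), together with the fact that conjugating the relation by the permutation $P_{23}$ of the two auxiliary $(V\otimes V^*)$ factors swaps the roles of spaces $1$ and $2$; combining these, the relation $R(v-u)T(-u)_1T(-v)_2=T(-v)_2T(-u)_1R(v-u)$ rearranges into $R(u-v)T(-v)_2 T(-u)_1 = T(-u)_1 T(-v)_2 R(u-v)$, which is precisely the anti-automorphism condition. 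For $T(u)\mapsto T(u)^{-1}$: the zero term of $T(u)^{-1}$ is $a_0^{-1}=a_0$ (the unit), so it is a legitimate invertible series. Here I would start from the RTT-relation, left-multiply by $T(u)_1^{-1}$ and right-multiply by $T(u)_1^{-1}$ (and similarly manipulate the $v$-factor), using the commutation of $T(u)_1$ with $T(v)_2$-independent pieces only formally — more precisely, invert the whole relation: from $R(u-v)T(u)_1T(v)_2 = T(v)_2T(u)_1R(u-v)$ one gets, applying inverses in the algebra $A[[u^{-1},v^{-1}]]$ and using that $R(u-v)$ is invertible with $R(u-v)^{-1}\propto R(v-u)$, the identity $T(v)_2^{-1}T(u)_1^{-1}R(u-v)^{-1} = R(u-v)^{-1}T(u)_1^{-1}T(v)_2^{-1}$, and then a spectral-parameter substitution plus the same $P_{23}$-conjugation as above converts this into the anti-automorphism relation for $T'(u)=T(u)^{-1}$.

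The routine parts are the bookkeeping with the auxiliary tensor factors and the elementary rational-function identities for $R$; the one genuine subtlety, and the step I expect to be the main obstacle, is justifying that all these manipulations — substitutions $u\mapsto -u$, conjugation by $P_{23}$, and especially \emph{inverting} the RTT-relation — are legitimate inside $A[[u^{-1},v^{-1}]]$ rather than merely formal symbol-pushing. In particular for the $T(u)^{-1}$ case one must check that $T(u)$ is invertible as an element of $B[[u^{-1}]]$ (already noted above via the unit zero term), that $R(u-v)$ is invertible in $\End((V\otimes V^*)^{\otimes 2})[[u^{-1}]]$ with $R(u-v)^{-1}=\frac{(u-v)^2}{(u-v)^2-1}\bigl(1+(u-v)^{-1}P\bigr)$ (a scalar-valued denominator, hence harmless after clearing), and that the passage from the $(u,v)$-ordered relation to the $(v,u)$-ordered one is exactly implemented by the braiding $P_{23}$, which is where one uses that the category is symmetric. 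Once these structural points are in place, each of the three assertions reduces to the corresponding identity among $R$, $P$, and $1$, which I would present as a short computation.
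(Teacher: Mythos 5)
Your argument is essentially the same as the paper's: a change of variables in the RTT-relation for the shift, conjugation by the permutation $P$ of the two auxiliary $(V\otimes V^*)$ factors (together with $P^2=\mathbf 1$, $PR(u-v)P=R(u-v)$ and $PT(u)_1P=T(u)_2$) for $T(u)\mapsto T(-u)$, and inversion of the RTT-relation for $T(u)\mapsto T(u)^{-1}$. One minor remark: in the last case the paper stops right after inverting --- multiplying $T(v)_2^{-1}T(u)_1^{-1}R(u-v)^{-1}=R(u-v)^{-1}T(u)_1^{-1}T(v)_2^{-1}$ on both sides by $R(u-v)$ already yields $R(u-v)T(v)_2^{-1}T(u)_1^{-1}=T(u)_1^{-1}T(v)_2^{-1}R(u-v)$, which \emph{is} the anti-automorphism condition for $T'(u)=T(u)^{-1}$ --- so the extra ``spectral-parameter substitution plus $P_{23}$-conjugation'' you append is superfluous.
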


\begin{proof}

\begin{enumerate}
    \item[1.]The equation $$R(u-v)T(u+s)_1T(v+s)_2=T(v+s)_2T(u_s)_1R(u-v)$$ is obtained by the change of variables $u+s\mapsto u, v+s\mapsto v$.

 \item[2.] For $T(u)\mapsto T(-u)$ we note that $P:\mathbb{1}\to (V\otimes V^*)\otimes (V\otimes V^*), P=\mathrm{coev}_{14}\mathrm{coev}_{23}$ commutes with $R(u-v)$, since $R(u-v)=\mathbf{1}-(u-v)^{-1}P$. Clearly, $P^2=\mathbf{1}$. It is also easy to see that $PT(u)_1P=T(u)_2$. Therefore, $$R(u-v)T(-v)_2T(-u)_1 = PR(u-v)PT(-v)_2P^2T(-u)_1P^2 = PR(-v-(-u))T(-v)_1T(-u)_2P =$$ $$= PT(-u)_2T(-v)_1R(-v-(-u))P = PT(-u)_2P^2T(-v)_1PR(u-v) = T(-u)_1T(-v)_2R(u-v).$$

 \item[3.] $T(u)\mapsto T(u)^{-1}$ is an anti-automorphism since $$R(u-v)T(v)_2^{-1}T(u)_1^{-1} = T(u)_1^{-1}T(v)_2^{-1}R(u-v)$$ is clearly equivalent to the RTT relation $R(u-v)T(u)_1T(v)_2 = T(v)_2T(u)_1R(u-v)$.\

 (Side note: the map $T(u)\mapsto T(u)^{-1}$ is actually the antipode of $Y$.)

 \end{enumerate}
\end{proof}

\vspace{.15cm}

\begin{corollary} \textit{Let $\omega_t$ be a map defined by $T(-u-t)^{-1}$. Then $\omega_t$ is an automorphism of $Y$}.
\end{corollary}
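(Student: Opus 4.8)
The plan is to realize $\omega_t$ as a composition of the three maps produced in Lemma 2.6 and then to invoke the elementary facts that the composition of two anti-automorphisms of an associative algebra is an automorphism, while the composition of two automorphisms is again an automorphism. Write $\phi_1$ for the shift $T(u)\mapsto T(u+t)$, write $\phi_2$ for the map $T(u)\mapsto T(-u)$, and write $\phi_3$ for the map $T(u)\mapsto T(u)^{-1}$; by Lemma 2.6 the map $\phi_1$ is an automorphism of $Y$ and $\phi_2,\phi_3$ are anti-automorphisms. I will show that $\omega_t=\phi_3\circ\phi_2\circ\phi_1$, and the corollary then follows at once: $\phi_3\circ\phi_2$ is a composition of two anti-automorphisms, hence an automorphism, and composing it with the automorphism $\phi_1$ keeps it an automorphism.

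To identify the composition $\phi_3\circ\phi_2\circ\phi_1$ I would track its action on the generating series, using that a map ``defined by an invertible element $T'(u)=\sum b_i u^{-i}$'' is the (anti-)endomorphism of $\tilde Y$ fixed on the $i$-th generator by $\tau(b_i)$, and hence acts on the coefficients $a_i$ of $T(u)$ by $a_i\mapsto b_i$. In particular $\phi_2$ is the anti-endomorphism determined by $a_i\mapsto(-1)^i a_i$, since $T(-u)=\sum_i(-1)^i a_i u^{-i}$. Applying the three maps in turn: $\phi_1(T(u))=T(u+t)=\sum_j a_j(u+t)^{-j}$; applying $\phi_2$ coefficient by coefficient yields $\sum_j(-1)^j a_j(u+t)^{-j}=\sum_j a_j(-u-t)^{-j}=T(-u-t)$; and applying $\phi_3$ to $T(-u-t)=\sum_j a_j(-u-t)^{-j}$, a combination of the $a_j$ with scalar coefficients on which an anti-homomorphism agrees with a homomorphism, yields $\sum_j\phi_3(a_j)(-u-t)^{-j}=T(-u-t)^{-1}$, the last equality coming from substituting the power series $(-u-t)^{-1}$ into the defining identity $\sum_j\phi_3(a_j)w^{-j}=T(w)^{-1}$. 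Thus $(\phi_3\circ\phi_2\circ\phi_1)(T(u))=T(-u-t)^{-1}$, so this composition is precisely the map $\omega_t$; one should also observe that $T(-u-t)^{-1}$ is a legitimate invertible element of $B[[u^{-1}]]$, since the substitution $u\mapsto-u-t$ preserves the property of having leading coefficient the unit $a_0$.

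I expect the delicate point to be exactly this identification: $\phi_2$ acts on the coefficients of the series (equivalently, it is the anti-endomorphism sending the $i$-th generator to $(-1)^i$ times itself), not by a naive substitution $u\mapsto-u$ inside $T(u+t)$, so one must check that $\phi_2(T(u+t))$ comes out as $T(-u-t)$ rather than $T(-u+t)$, and correspondingly that $\phi_1$ must be the shift by $+t$. As an alternative that avoids this bookkeeping, one may verify the RTT relation $R(u-v)T'(u)_1T'(v)_2=T'(v)_2T'(u)_1R(u-v)$ for $T'(u):=T(-u-t)^{-1}$ directly: substituting $u\mapsto-u-t$, $v\mapsto-v-t$ in the RTT relation for $T(u)$ (legitimate, as $(-u-t)-(-v-t)=v-u$) and conjugating by $P$ using $PR(u-v)P=R(u-v)$, $P^2=\mathbf{1}$ and $PT(u)_1P=T(u)_2$ gives $R(u-v)\,T(-v-t)_2T(-u-t)_1=T(-u-t)_1T(-v-t)_2\,R(u-v)$; left- and right-multiplying this by $T(-u-t)_1^{-1}$ and then by $T(-v-t)_2^{-1}$ produces the desired relation, and the criterion stated just before Lemma 2.6 then identifies $\omega_t$ as an automorphism.
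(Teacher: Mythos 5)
Your first approach — writing $\omega_t=\phi_3\circ\phi_2\circ\phi_1$ with $\phi_1$ the shift by $t$, $\phi_2:T(u)\mapsto T(-u)$, $\phi_3:T(u)\mapsto T(u)^{-1}$, and then using that two anti-automorphisms compose to an automorphism — is exactly the deduction from Lemma~2.6 that the paper leaves implicit (the corollary is stated without proof), and your care with the order of composition (that $\phi_2\circ\phi_1$ gives $T(-u-t)$ while $\phi_1\circ\phi_2$ would give $T(-u+t)$) addresses the one genuinely delicate point. The alternative direct verification of the RTT relation for $T(-u-t)^{-1}$ is also correct and self-contained; both routes are fine, and the first matches the paper's intent.
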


\vspace{.19cm}

We can rewrite the relation for $U(\g)$ in the following way: instead of the map\\ $f=i_2-i_2P-i_1c:\g\otimes\g\to T(\g)$

\begin{tikzpicture}[line width=1pt, scale=.5]
	\foreach\x in {2,4,8,10,14,16,20,22}{
		\node[bV] (t\x) at (\x,1){};}
	\foreach\x in {1,3,7,9,13,15,19,21}{
		\node[cV] (t\x) at (\x,1){};}
	\foreach\x in {2,4,8,10,15,21}{
	    \node[bV] (b\x) at (\x,-1){};}
	\foreach\x in {1,3,7,9,14,20}{
	    \node[cV] (b\x) at (\x,-1){};}
    \node at (0,.3) {$f=$};
    \node at (5.2,.3){$-$};
    \node at (11.2,.3){$-$};
    \node at (17.2,.3){$+$};
	\draw [bend left] (t15) to (t14) (t22) to (t19);
	\foreach\x in {1,2,3,4}{
	    \draw (b\x)--(t\x);}
	\draw (b7)--(t9) (b8)--(t10) (b9)--(t7) (b10)--(t8) (b14)--(t13) (b15)--(t16) (b20)--(t21) (b21)--(t20);
\end{tikzpicture}

\noindent
we can consider a map $f':\mathbb{1}\to T(\g)\otimes \g^*)^{\otimes 2}=T(V^*\otimes V)\otimes (V\otimes V^*)^{\otimes 2}$

\begin{tikzpicture}[line width=1pt, scale=.5]
	\foreach\x in {2,4,5,7,12,14,15,17}{
		\node[bV] (t\x) at (\x,1){};}
	\foreach\x in {1,3,6,8,11,13,16,18}{
	    \node[cV] (t\x) at (\x,1){};}
	\draw [bend left] (t1) to (t5) (t2) to (t6) (t3) to (t7)  (t4) to (t8) (t11) to (t17) (t12) to (t18) (t13) to (t15) (t14) to (t16);
    \node[left] at (0,1) {$f'=$};
    \node at (.7,1){$($};
    \node at (4.4,1){$)$};
    \node at (9.2,1){$-$};
    \node at (19.2,1){$-$};
    \node at (10.7,1){$($};
    \node at (14.4,1){$)$};  
    \foreach\x in {2,3,5,10,11,13}{
		\node[bV] (b\x) at (\x,-1){};}
	\foreach\x in {1,4,6,9,12,14}{
	    \node[cV] (b\x) at (\x,-1){};}
	\draw [bend left] (b1) to (b3) (b2) to (b6) (b4) to (b5) (b9) to (b13) (b10) to (b12) (b11) to (b14);
	\node at (0.7,-1){$($};
    \node at (2.4,-1){$)$};  
    \node at (8.7,-1){$($};
    \node at (10.4,-1){$)$};  
    \node at (0,-1){$-$};
    \node at (7.2,-1){$+$};
    \node at (14.5,-1.2) {.};

\end{tikzpicture}

 If we identify $\g$ with $(V^*\otimes V)_1$, then this map is equal to $$(a_1)_1(a_1)_2-(a_1)_2(a_1)_1-P(a_1)_2+(a_1)_2P,~~~~(\star)$$ because 

\begin{tikzpicture}[line width=1pt, scale=.75]
	\foreach\x in {2,3,5}{
		\node[bV] (t\x) at (\x,1){};}
	\foreach\x in {1,4,6}{
	    \node[cV] (t\x) at (\x,1){};}
	\draw [bend left] (t1) to (t3) (t2) to (t6) (t4) to (t5);
    \node[left] at (0,1) {$P(a_1)_2=$};
    \node at (.7,1){$($};
    \node at (2.4,1){$)_1$};
    \foreach\x in {2,3,5}{
		\node[bV] (b\x) at (\x,-1){};}
	\foreach\x in {1,4,6}{
	    \node[cV] (b\x) at (\x,-1){};}
	\draw [bend left] (b1) to (b5) (b2) to (b4) (b3) to (b6);
    \node[left] at (0,-1) {$(a_1)_2P=$};
    \node at (.7,-1){$($};
    \node at (2.4,-1){$)_1$};
\end{tikzpicture}

Now consider  the coefficient of $u^{-1}v^{-1}$ in the RTT-relation $$R(u-v)T(u)_1T(v)_2  -T(v)_2T(u)_1R(u-v) = 0.$$

Clearly, it is the same as the coefficient of $u^{-1}v^{-1}$ in the following expression:
$$
(1-u^{-1}P)(1+u^{-1}(a_1)_1)(1+v^{-1}(a_1)_2)-(1+v^{-1}(a_1)_2)(1+u^{-1}(a_1)_1)(1-u^{-1}P).
$$

And this coefficient is equal to $(\star)$. 
 
Therefore, the restriction of the RTT-relation to the subalgebra $T((V\otimes V^*)_1)$ of $\tilde Y$ coincides with the relation for $U(\g)$. Hence, we have the embedding $U(\g)\hookrightarrow Y$ and, most importantly, the evaluation map:$$
E: Y\to U(\g),
$$that sends $(V^*\otimes V)_1$ to $\g$ and sends $(V^*\otimes V)_i$ to zero when $i>1$. Clearly, the composition of $E$ with the embedding is the identity on $U(\g)$.

\section{The main theorem.}

~~~~~To obtain Olshanski's classical centralizer construction of the Yangian $Y(\goth{gl}_n)$, we consider the centralizers $A_n(N)~:=~U(\goth{gl}_N)^{\goth{gl}_{N-n}}$, where the subalgebra $\goth{gl}_{N-n}$ is embedded canonically into $\goth{gl}_N$, i.e. it is generated by the elements $E_{i j},~~ i,j\in\{n+1,...,N\}$ (in this notation $A_0(N)$ is the center of $U(\goth{gl}_N)$). We also need the surjective restrictions $A_n(N)\to A_n(N-1)$ that respect filtrations inherited from $U(\gl_N)$ and $U(\gl_{N-1})$ correspondingly. The explicit construction for these maps could be found in Section $8.1$ of Molev's book \cite{M}. Consider the diagram

\xymatrix{
&\ldots\ar@{->>}[r]&A_n(N+1)\ar@{->>}[r]&A_n(N)\ar@{->>}[r]&...\ar@{->>}[r]&A_n(n),
}
\noindent
and let $A_n$ be its projective limit in the category of filtered algebras. Then Olshanski's theorem says that
$$
A_n \simeq Y_n\otimes A_0.
$$

The map from the Yangian to the limit is given by the family of maps $\psi_N: Y_n\to A_n(N)$ that commute with the maps in the diagram. By construction, each $\psi_N$ is the restriction of the composition of the automorphism $\omega_N: Y_N\to Y_N, T(u)\mapsto T(-u-N)^{-1}$ and the evaluation homomorphism $Y_N\to U(\gl_N)$ to the canonically embedded subalgebra $Y_n$. \cite{M}

The algebra $A_0(N)$ is the polynomial algebra in $N$ variables of degrees $1,...,N$, that correspond to the traces of degrees of the matrix $(e_{i j})$. Thus the projective limit $A_0$ is the polynomial algebra in variables  $\{x_i~|~deg~x_i = i,~i\in \mathbb Z_{\ge 0}\}$. We will keep this notation for our statement. For each $N$ we have a map $\mathcal Z_N: A_0\to A_0(N+n)\hookrightarrow U(\gl_{N+n})^{\gl_n}$ whose image is just the center of $U(\gl_{N+n})$. Then $Y_n\otimes A_0$ is mapped to $A_n(N)$ via the map $\phi_N:=\psi_N\otimes \mathcal Z_N$. The induced map to the limit $\phi_\infty: Y_n\otimes A_0 \to A_n$ is the isomorphism we wished for. 

Now consider the restriction of the universal enveloping algebra of $\goth{gl}_{t+n}$ via the restriction functor defined above:
$$
\Res_n(U(\goth{gl}_{t+n})) \in \Rep(GL_t).
$$
The analogue for the centralizer of the subalgebra  $\goth{gl}_t$ inside it would be "the invariants of the action of $GL_t$", i.e.
$$
\Res_n(U(\goth{gl}_{t+n}))^{GL_t} \stackrel{\mathrm{def}}{=} \Hom_{\Rep(GL_t)}(\mathbb{1}, \Res_n(U(\goth{gl}_{t+n}))).
$$

We will prove the following statement, which is our main result.

\begin{theorem} For any transcendental $t\in\mathbb{C}$ 
$$
\Res_n(U(\goth{gl}_{t+n}))^{GL_t} \simeq Y_n\otimes A_0.
$$

\end{theorem}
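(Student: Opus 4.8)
The plan is to mimic Olshanski's classical strategy while replacing the "stabilization as $N\to\infty$" argument by a direct identification valid at the categorical level, using the functors $\Res_n$, $\Res_{s,t}$ and $Ev_n$ together with the fact that for transcendental $t$ everything in sight is a semisimple (or at least well-behaved Ind-) object, so that the relevant $\Hom$-spaces are computed by diagrams in the Walled Brauer algebras. First I would unwind the definition: $\Res_n(U(\g_{t+n})) \simeq U(\g)\otimes(\text{something built from the }\mathbb{1}^n\text{ summand})$, using the decomposition $\Res_n(V_{(t+n)}) = \mathbb{1}^n\oplus V_{(t)}$ and the compatibility of $\Res_n$ with tensor products, Lie brackets and the PBW filtration (Lemma 2.3 and Definition 2.4). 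Concretely $\g_{t+n} = V_{(t+n)}^*\otimes V_{(t+n)}$ restricts to $(\mathbb{1}^n\oplus V_{(t)}^*)\otimes(\mathbb{1}^n\oplus V_{(t)})$, i.e. to $\g_t \oplus (V_{(t)}^*)^{\oplus n}\oplus (V_{(t)})^{\oplus n}\oplus \mathbb{1}^{n^2}$ as an object, and I would track what the bracket of $U(\g_{t+n})$ does to these pieces — this is the categorical shadow of the block decomposition of the $(N+n)\times(N+n)$ matrix into $\g_N$, the two rectangular blocks, and the $\g_n$ block.

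The second step is to build the map $\phi\colon Y_n\otimes A_0\to \Res_n(U(\g_{t+n}))^{GL_t}$. For the $A_0$ factor I would use $\mathcal Z\colon A_0 = \Hom(\mathbb{1},S\g_t)=\Hom(\mathbb{1},U(\g_t))\to \Hom(\mathbb{1},\Res_n U(\g_{t+n}))$ induced by the algebra map $U(\g_t)\to \Res_n U(\g_{t+n})$ coming from the embedding $\g_t\hookrightarrow \Res_n\g_{t+n}$ (the "lower-right block"); the identification $\Hom(\mathbb{1},U(\g_t))\cong A_0$ is exactly the last displayed fact in Definition 2.4, quoted from \cite{E2}. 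For the $Y_n$ factor I would imitate $\psi_N$: first apply the automorphism $\omega_t$ of $Y_t$ from Corollary 2.8 (this is the analogue of $\omega_N$, with the shift by the dimension $N$ replaced by the shift by $t$ — note $\omega_t$ is defined by $T(-u-t)^{-1}$, precisely matching the classical formula with $N\rightsquigarrow t$), then the evaluation homomorphism $E\colon Y_t\to U(\g_t)$ constructed at the end of Section 2, then push $U(\g_t)$ into $\Res_n U(\g_{t+n})$ using the full $\g_{t+n} = \g_t\oplus\text{(rectangular blocks)}\oplus\g_n$ decomposition so that the image of the $i$-th Yangian generator lands in the "$t$-part" and the rectangular blocks conspire (via the RTT relation) to make the result $GL_t$-invariant and to make the $Y_n$-generators survive; finally restrict to $Y_n\hookrightarrow Y_t$. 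I would then check the RTT relation for the resulting $T'(u)$ inside $\Hom(\mathbb{1}, \Res_n U(\g_{t+n})\otimes(V_{(t)}\otimes V_{(t)}^*)^{\otimes 2})$ — this is a diagrammatic computation in a Walled Brauer algebra with the parameter $t$ appearing only polynomially, so it suffices to verify it for $t=N$ a large integer, where $Ev_N$ turns it into the classical Olshanski identity.

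The third step is to prove $\phi$ is an isomorphism. Here the key is a reduction to the classical case: both sides are Ind-objects whose graded (or filtered) pieces lie in the Karoubian subcategory generated by tensor powers of $V_{(t)}$ (for the left side by Lemma 2.3 applied to $U(\g_{t+n})$ and exactness of $\Res_n$; for the right side by construction of $Y_n$ in Definition 2.6 and Lemma 2.3), so in each filtration degree both sides are finite direct sums of indecomposables with multiplicities given by dimensions of certain $\Hom$-spaces that depend polynomially on $t$. It therefore suffices to show $\phi$ is an isomorphism in each filtration degree for all large integer $t=N$, and there, applying $Ev_N$ and using that $Ev_N\circ\Res_n$ is (compatibly) $\mathbf{Rep}\,GL(N)$-restriction $U(\gl_{N+n})\leadsto U(\gl_{N+n})^{GL(N)}$, the statement $\phi$ becomes exactly the filtered-degree-$\leq d$ part of the statement $\phi_N$ is an isomorphism onto its image — but Olshanski's theorem only gives the isomorphism \emph{after} passing to the projective limit, so the real content is that for transcendental $t$ no "collapse in the limit" happens, i.e. each $\psi_N\otimes\mathcal Z_N$ is already injective in low filtration degree once $N\gg d$, which is precisely the stabilization lemma underlying \cite{M, O1}.

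The main obstacle I expect is exactly this last point: showing surjectivity of $\phi$, i.e. that every $GL_t$-invariant in $\Res_n U(\g_{t+n})$ is hit. Classically this is the heart of Olshanski's theorem (the "centralizer is generated by Yangian and center" statement), proved via an explicit spanning set / triangularity argument for the generators of $A_n(N)$; I would need to transport that spanning argument to the Deligne setting by (i) producing, in each filtration degree $d$, an explicit finite set of invariant diagrams spanning $\Hom(\mathbb{1},F^d\Res_n U(\g_{t+n}))$, with cardinality a $t$-independent number, (ii) matching it against the image of $\phi$ in that degree, and (iii) checking equality of these two subspaces of a fixed Walled-Brauer-type Hom space — again reducible to $t=N\gg d$, where it is Olshanski's spanning statement truncated to degree $\leq d$. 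The Appendix (Theorem 5.2) on the abelian envelope is what makes the "$\operatorname{gr}$ lives in the Karoubian subcategory, hence dimensions are polynomial in $t$" argument legitimate; I would lean on it to justify the interpolation at every step where I pass from generic $t$ to integer $t=N$.
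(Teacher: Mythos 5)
Your overall strategy is the paper's: build $\phi=\psi\otimes\mathcal Z$ explicitly, prove surjectivity at the level of associated graded, and prove injectivity by algebraic dependence on $t$ and comparison with $\phi_{N+n}$ at large integer $N$. But the construction of $\phi$ contains two concrete errors, and the surjectivity is left at the level of a plan.

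First, you apply $\omega_t$ to $Y_t$ and then evaluate to $U(\g_t)$. That is the wrong shift. In Olshanski's $\psi_N$, the automorphism is $\omega_N$ of $Y_N$, where $N$ is the dimension of the \emph{big} Lie algebra $\gl_N$; the centralizer is taken with respect to the \emph{small} $\gl_{N-n}$. In the categorical translation, the big Lie algebra is $\gl_{t+n}$ and the centralizer is with respect to $\gl_t$, so the correct identification is $N\rightsquigarrow t+n$, not $N\rightsquigarrow t$. The paper's $F$ is $E\circ\omega_{t+n}\colon Y_{t+n}\to U(\gl_{t+n})$, and $\psi$ is the restriction of $\Hom(\mathbb{1},\Res_n(F))$ to the subalgebra $Y_n\subseteq\Hom(\mathbb{1},\Res_n(Y_{t+n}))$ generated by $(\mathbb{1}^n\otimes\mathbb{1}^n)_i$. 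Your phrase ``restrict to $Y_n\hookrightarrow Y_t$'' is also not meaningful: $Y_n$ is an honest $\mathbb C$-algebra sitting in the invariants of $\Res_n(Y_{t+n})$, not a subobject of the algebra $Y_t$ in $\Rep(GL_t)$.

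Second, your $\mathcal Z$ is built from the block embedding $U(\g_t)\hookrightarrow\Res_n U(\g_{t+n})$ and the identification $A_0\cong\Hom(\mathbb{1},U(\g_t))$. The paper instead uses $A_0\cong\Hom(\mathbb{1}_{\Rep(GL_{t+n})},U(\g_{t+n}))$ and lets $\mathcal Z$ be the map induced by the functor $\Res_n$, so that the image is the \emph{center of the big algebra} $U(\g_{t+n})$ sitting inside the invariants. This matches Olshanski's $\mathcal Z_N\colon A_0\to A_0(N+n)$, whose image is the center of $U(\gl_{N+n})$, not the center of the block $U(\gl_N)$. The two choices give genuinely different subalgebras of $\Res_n(U(\g_{t+n}))^{GL_t}$, and the injectivity step relies on $Ev_N\circ\phi$ agreeing with Olshanski's $\phi_{N+n}$; with your $\mathcal Z$ that identity fails on the nose and would have to be repaired.

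Finally, on surjectivity: the paper does not reduce to integer $t$ here. It computes $\psi^{gr}$ and $\mathcal Z^{gr}$ diagrammatically in $S((V\oplus\mathbb{1}^n)^*\otimes(V\oplus\mathbb{1}^n))^{GL_t}$, isolates generators $a_k$ (open chains with square endpoints) and $b_k$ (closed cycles of $\circ\bullet$ pairs), and gives a direct combinatorial argument that every invariant string of length $2k$ factors, modulo $S(\gl_n)$, into products of $a_i$'s and $b_j$'s. This yields surjectivity for all $t$, not merely Weyl-generic $t$. Your plan to transport Olshanski's spanning argument by specializing at $t=N\gg d$ is not clearly legitimate without first establishing that the dimensions of $\Hom(\mathbb{1},F^d\Res_n U(\g_{t+n}))$ and of the image of $\phi$ in degree $d$ are both polynomial in $t$ and agree at infinitely many integers, and you have not addressed the degeneration of $\Rep(GL_t)$ at integer $t$ (which is why the paper routes the comparison through $Ev_N$ rather than specializing $t$ directly). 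The injectivity half of your argument does match the paper.
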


Note that our construction doesn't require passing to limits.

\begin{remark} In fact, the theorem that we are going to prove says that we have such an isomorphism for Weyl generic $t$, i.e. for all $t$ except a countable number of values. However, it follows immediately from the discussion below that if the statement of Theorem 3.1 holds for at least one transcendental $t$, then it holds for all $t\in\mathbb C\setminus \overline{\mathbb Q}$.    

The category $\Rep(GL_t)$ is linear over $\mathbb C$. Given some map of fields $\mathbb C\to K$ we can perform the base change on $\Rep(GL_t)$ turning it into $K-$linear category.

One can construct $\Rep(GL_t)$ for transcendental $t$ as a subcategory of the ultraproduct $\prod_{\mathcal U} \mathcal C_i$ of categories $\mathcal C_i=\mathbf{Rep}~ GL(n,\overline{\mathbb Q})$, which becomes a $\mathbb C$-linear category after choosing the identification $\prod_{\mathcal U}\overline{\mathbb Q}\to \mathbb C$ with $(1,2,3,4,...)\mapsto t$, where $\mathcal U$ is some nontrivial ultrafilter on $\mathbb N$. This construction was discussed in \cite{D1} and is similar  to the construction we introduce in the Appendix. 

Now for any two transcendental complex numbers $s,t$ there exists an automorphism of $\mathbb C$ over $\overline{\mathbb Q}$ that sends $t$ to $s$. The base change of $\Rep(GL_{t+N})$ via this automorphism is then (using the construction above) $\Rep(GL_{s+N})$ for any $N\in\mathbb Z$. Therefore,  
$$
\Res_n(U(\goth{gl}_{t+n}))^{GL_t} \simeq Y_n\otimes A_0.
$$
for any transcendental $t\in\mathbb C$.

\end{remark}

\section{Proof of Theorem 3.1.}

~~~~~First we want to define a map $\psi: Y_n \to \Res_n(U(\goth{gl}_{t+n}))^{GL_t}$. Let $F$ be the composition

\xymatrix{
&Y_{t+n}\ar[rr]^{\omega_{t+n}}\ar@/_1pc/@{.>}[rrrr]_{F}&&Y_{t+n}\ar[rr]^{E}&&U(\gl_{t+n}).
}

The restriction $\Res_n(Y_{t+n})$ is an algebra in the category $\Rep(GL_t)$ with the generators $((V\oplus\mathbb{1}^n)\otimes (V^*\oplus \mathbb{1}^n))_i$. It has two trivially embedded subalgebras: $Y_t$ and the "real" Yangian  $Y_n$ generated by $(\mathbb{1}^n\otimes \mathbb{1}^n)_i$. Thus, $Y_n$ is clearly a subalgebra in the algebra of invariants $\Hom(\mathbb{1}, \Res_n(Y_{t+n}))$. Whilst $U(\gl_n)$ is a subalgebra in the algebra of invariants of  $\Res_n(U(\gl_{t+n}))$. 

Then let $\psi$ be the restriction to $Y_n$ of the map $$\Hom(\mathbb{1}, \Res_n(F)): \Hom(\mathbb{1}, \Res_n(Y_{t+n}))\to \Hom(\mathbb{1}, \Res_n(U(\gl_{t+n}))).$$

Since $A_0 = \Hom(\mathbb{1}, U(\gl_{t+n}))$, we have the homomorphism $$\mathcal Z:A_0\rightarrow \Hom(\mathbb{1}, \Res_n(U(\gl_{t+n}))).$$ Therefore, we have a well defined homomorphism of algebras $\phi = \psi\otimes\mathcal Z: Y_n\otimes A_0\to \Res_n(U(\gl_{t+n}))^{GL_t}$. We want to show that it is surjective for all $t\in\mathbb C$ and injective for Weyl generic  $t$.

\subsection{Surjectivity:}

~~~~~Let $T(u) = \sum a_ku^{-k}$. It is easy to see that $$T(-u-t)^{-1}/(a_i=0, \forall i>1) = \sum_k (a_1+t)^ku^{-k}$$ ($T(-u-t)^{-1}$ gives an automorphism $\omega_{t}$ and putting all $a_i=0$ for $i>1$ means composing it with the evaluation homomorphism). 

Remember that $\tau$ is the isomorphism between $\Hom(\mathbb{1},Y\otimes(V\otimes V^*))$ and $\Hom(V^*\otimes V,Y)$. Then it is easy to compute $\tau(a_1^k)$:

\begin{tikzpicture}[line width=1pt, scale=.75]
    \foreach\x in {2}{
		\node[bV] (b\x) at (6,2){};}
    \node[cV] (b1) at (5,2){};
	\foreach\x in {2,5,10}{
		\node[bV] (t\x) at (\x,0){};}
	\foreach\x in {1,4,9}{
	    \node[cV] (t\x) at (\x,0){};}
	\foreach\x in {0.7,3.7,8.7}{
	\node at (\x,0){(};}
	\foreach\x in {2.4,5.4,10.4}{
	    \node at (\x,0){$)_1$};}
	\foreach\x in {3,6,8}{
	    \node at (\x,0){$\cdot$};}
	\node at (6.9,0){$\ldots$};
	\draw [bend left] (t2) to (t4) (t5) to (5.8,0.2) (8.2,0.2) to (t9);
	\draw (b1)--(t1) (b2)--(t10);
    \node[left] at (-1,1) {$\tau(a_1^k):$};
\end{tikzpicture}

where $\cdot$ stands for multiplication in $Y$.

The map $\psi$ acts on the generators $T^k\stackrel{\mathrm{def}}{=}((\mathbb{1}^n)^*\otimes\mathbb{1}^n)_k\subset Y(\gl_n)$ in the same way as the composition of the restriction of $\tau(\Res_n((a_1+t+n)^k))$ (the term in inner brackets  is the k-th term in $T(-u-t-n)^{-1}/(a_i=0, i>1)$) to $Y_n$ with the evaluation homomorphism to $\Res_n(U(\mathfrak{gl}_{t+n}))$ (i.e. after expanding $(a_1+(t+n))^k$ as a combination of terms of the form $a_1^i$, in the diagram above one should interpret $\bullet$ as $V\oplus\mathbb{1}^n=\Res_n(V)$ and $\circ$ as its dual,  then take the restriction to $(\mathbb{1}^n)^*\otimes \mathbb{1}^n$, and then map each term $(\circ~ ~\bullet)_1$ isomorphically to $\Res_n(\gl_{t+n})$): 

\begin{tikzpicture}[line width=1pt, scale=.75]
    \node[draw, fill =black,  inner sep=0pt, minimum size=4pt] (b2) at (6,2){};
    \node[draw, fill =white,  inner sep=0pt, minimum size=4pt] (b1) at (5,2){};
	\foreach\x in {2,5}{
		\node[bV] (t\x) at (\x,0){};}
	\foreach\x in {4,9}{
	    \node[cV] (t\x) at (\x,0){};}
	\node[draw, fill =black,  inner sep=0pt, minimum size=4pt] at (10,0){};
	\node[draw, fill =white,  inner sep=0pt, minimum size=4pt] at (1,0){};
	\foreach\x in {0.7,3.7,8.7}{
	\node at (\x,0){(};}
	\foreach\x in {2.4,5.4,10.4}{
	    \node at (\x,0){$)$};}
	\foreach\x in {3,6,8}{
	    \node at (\x,0){$\star$};}
	\node at (6.9,0){$\ldots$};
	\draw [bend left] (t2) to (t4) (t5) to (6.6,0.2) (7.4,0.2) to (t9);
	\draw (b1)--(t1) (b2)--(t10);
	\node at (5.5,-2){$+$};

	\node at (-2.5,-2) {$\psi|_{T^k}=\sum_i \binom{k}{i} (t+n)^{k-i}$};
\end{tikzpicture}

\begin{tikzpicture}[line width=1pt, scale=.75]
    \node[draw, fill =black,  inner sep=0pt, minimum size=4pt] (b2) at (6,2){};
    \node[draw, fill =white,  inner sep=0pt, minimum size=4pt] (b1) at (5,2){};
	\foreach\x in {2,5}{
		\node[draw, fill =black,  inner sep=0pt, minimum size=4pt] (t\x) at (\x,0){};}
	\foreach\x in {4,9}{
	    \node[draw, fill =white,  inner sep=0pt, minimum size=4pt] (t\x) at (\x,0){};}
	\node[draw, fill =black,  inner sep=0pt, minimum size=4pt] at (10,0){};
	\node[draw, fill =white,  inner sep=0pt, minimum size=4pt] at (1,0){};
	\foreach\x in {0.7,3.7,8.7}{
	\node at (\x,0){(};}
	\foreach\x in {2.4,5.4,10.4}{
	    \node at (\x,0){$)$};}
	\foreach\x in {3,6,8}{
	    \node at (\x,0){$\star$};}
	\node at (6.9,0){$\ldots$};
	\draw [bend left] (t2) to (t4) (t5) to (6.6,0.2) (7.4,0.2) to (t9);
	\draw (b1)--(t1) (b2)--(t10);
	\node at (-5.3,-1) {};
\end{tikzpicture}

Here, $\star$ is the multiplication in $\Res_n(U(\gl_{t+n}))$,  $\blacksquare$ and {\scriptsize $\square$}  \normalsize denote $\mathbb{1}^n$ and $(\mathbb{1}^n)^*$ correspondingly and the number of terms in brackets is equal to $i$.

We can introduce a filtration on $Y_n$ that comes from the grading on $T(Y_n^{gen}), $ where $deg~ t_{ij}^{(l)} = l$. PBW for the Yangian says that $\mathrm{gr}(Y_n)=S(Y_n^{gen})$. And the filtration on $A_0$ is induced from the grading defined before: $\mathrm{deg}~ x_k = k$. Then we have a filtration on $Y_n\otimes A_0$
$$
F^m(Y_n\otimes A_0) = \sum F^k(Y_n)\otimes F^{m-k}(A_0).
$$

To prove the surjectivity of $\phi=\psi\otimes \mathcal Z$, it is enough to show that the induced map of associated graded algebras $\psi^{gr}\otimes \mathcal Z^{gr}: S(\bigoplus_k T^k)\otimes A_0\to S((V\oplus\mathbb{1}^n)^*\otimes(V\oplus\mathbb{1}^n))^{GL_t}$ is surjective. To obtain $\psi^{gr}$ from $\psi$, we should forget about the lower order terms in $(a_1+t+n)^k$ and change the multiplication $\star$ to the commutative multiplication in $S(\Res_n(\gl_{t+n}))$, i.e. instead of the sum over all $i$ we'll get that $\psi^{gr}|_{T^k}$ is just the sum of two maps $T^k\to S^k(\Res_n(\gl_{t+n}))$: 

\begin{tikzpicture}[line width=1pt, scale=.75]
    \node[draw, fill =black,  inner sep=0pt, minimum size=4pt] (b2) at (6,2){};
    \node[draw, fill =white,  inner sep=0pt, minimum size=4pt] (b1) at (5,2){};
	\foreach\x in {2,5}{
		\node[bV] (t\x) at (\x,0){};}
	\foreach\x in {4,9}{
	    \node[cV] (t\x) at (\x,0){};}
	\node[draw, fill =black,  inner sep=0pt, minimum size=4pt] at (10,0){};
	\node[draw, fill =white,  inner sep=0pt, minimum size=4pt] at (1,0){};
	\foreach\x in {0.7,3.7,8.7}{
	\node at (\x,0){(};}
	\foreach\x in {2.4,5.4,10.4}{
	    \node at (\x,0){$)$};}

	\node at (6.9,0){$\ldots$};
	\draw [bend left] (t2) to (t4) (t5) to (6.6,0.2) (7.4,0.2) to (t9);
	\draw (b1)--(t1) (b2)--(t10);
	\node at (5.5,-2){$+$};

	\node at (-2,-2) {$\psi^{gr}|_{T^k}=$};
\end{tikzpicture}

\begin{tikzpicture}[line width=1pt, scale=.75]
    \node[draw, fill =black,  inner sep=0pt, minimum size=4pt] (b2) at (6,2){};
    \node[draw, fill =white,  inner sep=0pt, minimum size=4pt] (b1) at (5,2){};
	\foreach\x in {2,5}{
		\node[draw, fill =black,  inner sep=0pt, minimum size=4pt] (t\x) at (\x,0){};}
	\foreach\x in {4,9}{
	    \node[draw, fill =white,  inner sep=0pt, minimum size=4pt] (t\x) at (\x,0){};}
	\node[draw, fill =black,  inner sep=0pt, minimum size=4pt] at (10,0){};
	\node[draw, fill =white,  inner sep=0pt, minimum size=4pt] at (1,0){};
	\foreach\x in {0.7,3.7,8.7}{
	\node at (\x,0){(};}
	\foreach\x in {2.4,5.4,10.4}{
	    \node at (\x,0){$)$};}

	\node at (6.9,0){$\ldots$};
	\draw [bend left] (t2) to (t4) (t5) to (6.6,0.2) (7.4,0.2) to (t9);
	\draw (b1)--(t1) (b2)--(t10);
	\node at (-3.05,-1) {};
\end{tikzpicture}

It is obvious that the subalgebra in $\mathrm{gr}(Y_n)$ generated by $T^1$ maps isomorphically to the subalgebra $S(\gl_n)\subset S(\Res_n(\gl_{t+n}))^{GL_t}$. This means that we a priori have all invariants of the form  
\begin{tikzpicture}[line width=1pt, scale=.75]
    
	\foreach\x in {2,5}{
		\node[draw, fill =black,  inner sep=0pt, minimum size=4pt] (t\x) at (\x,0){};}
	\foreach\x in {4,9}{
	    \node[draw, fill =white,  inner sep=0pt, minimum size=4pt] (t\x) at (\x,0){};}
	\node[draw, fill =black,  inner sep=0pt, minimum size=4pt] at (10,0){};
	\node[draw, fill =white,  inner sep=0pt, minimum size=4pt] at (1,0){};
	\foreach\x in {0.7,3.7,8.7}{
	\node at (\x,0){(};}
	\foreach\x in {2.4,5.4,10.4}{
	    \node at (\x,0){$)$};}
	\node at (6.9,0){$\ldots$};
	\draw [bend left] (t2) to (t4) (t5) to (6.6,0.2) (7.4,0.2) to (t9);

\end{tikzpicture}. So, since the second term in $\psi^{gr}_k$ maps $T^k$ into this subalgebra, we can forget about it. So, let $a_k$ be the image of the first term of $\psi^{gr}_k$:

\begin{tikzpicture}[line width=1pt, scale=.75]
	\foreach\x in {2,5}{
		\node[bV] (t\x) at (\x,0){};}
	\foreach\x in {4,9}{
	    \node[cV] (t\x) at (\x,0){};}
	\node[draw, fill =black,  inner sep=0pt, minimum size=4pt] at (10,0){};
	\node[draw, fill =white,  inner sep=0pt, minimum size=4pt] at (1,0){};
	\foreach\x in {0.7,3.7,8.7}{
	\node at (\x,0){(};}
	\foreach\x in {2.4,5.4,10.4}{
	    \node at (\x,0){$)$};}
	\node at (6.9,0){$\ldots$};
	\draw [bend left] (t2) to (t4) (t5) to (6.6,0.2) (7.4,0.2) to (t9);
	\node at (-.5,0) {$a_k=$};
\end{tikzpicture}.

The variable $x_k \in A_0$ is mapped to the sum of the following invariants in $S^k(\Res_n(\gl_{t+n})):$

\begin{tikzpicture}[line width=1pt, scale=.75]
	\foreach\x in {2,5,10}{
		\node[bV] (t\x) at (\x,0){};}
	\foreach\x in {1,4,9}{
	    \node[cV] (t\x) at (\x,0){};}
	\foreach\x in {0.7,3.7,8.7}{
	\node at (\x,0){(};}
	\foreach\x in {2.4,5.4,10.4}{
	    \node at (\x,0){$)$};}
	\node at (6.9,0){$\ldots$};
	\draw [bend left] (t2) to (t4) (t5) to (6.6,0.2) (7.4,0.2) to (t9) (t1) to (t10);
	\node at (5.5,-1) {$+$};
	\node at (-1,-1){$\mathcal Z^{gr}(x_k)=$};

\end{tikzpicture}

\begin{tikzpicture}[line width=1pt, scale=.75]

    \foreach\x in {2,5}{
		\node[draw, fill =black,  inner sep=0pt, minimum size=4pt] (t\x) at (\x,0){};}
	\foreach\x in {4,9}{
	    \node[draw, fill =white,  inner sep=0pt, minimum size=4pt] (t\x) at (\x,0){};}
    \node[draw, fill =black,  inner sep=0pt, minimum size=4pt] at (10,0){};
	\node[draw, fill =white,  inner sep=0pt, minimum size=4pt] at (1,0){};
	\foreach\x in {0.7,3.7,8.7}{
	\node at (\x,0){(};}
	\foreach\x in {2.4,5.4,10.4}{
	    \node at (\x,0){$)$};}
	\node at (6.9,0){$\ldots$};
	\draw [bend left] (t2) to (t4) (t5) to (6.6,0.2) (7.4,0.2) to (t9) (t1) to (t10);
	\node at (-2,-1){};
\end{tikzpicture}

The second term is again in $S(\gl_n)$, so we can forget it. Let $b_k$ denote the first term:

\begin{tikzpicture}[line width=1pt, scale=.75]
	\foreach\x in {2,5,10}{
		\node[bV] (t\x) at (\x,0){};}
	\foreach\x in {1,4,9}{
	    \node[cV] (t\x) at (\x,0){};}
	\foreach\x in {0.7,3.7,8.7}{
	\node at (\x,0){(};}
	\foreach\x in {2.4,5.4,10.4}{
	    \node at (\x,0){$)$};}
	\node at (6.9,0){$\ldots$};
	\draw [bend left] (t2) to (t4) (t5) to (6.6,0.2) (7.4,0.2) to (t9) (t1) to (t10);
	\node at (-1,0){$b_k=$};
	\node at (14,0){$(*)$};
\end{tikzpicture}

We want to show now the all that invariants in $S((V\oplus\mathbb{1}^n)^*\otimes(V\oplus\mathbb{1}^n))$ are in the image of $\varphi^{gr}$, i.e. they are polynomiars in $a_i,b_j$. Let us lift everything to $T(\Res_n(\gl_{t+n}))$ and check that we get all the invariants up to permutations.

Consider the $k$-th graded component $((V\oplus\mathbb{1}^n)^*\otimes(V\oplus\mathbb{1}^n))^{\otimes k}$ of the tensor algebra. Of all direct summands in this product we are interested only in those that contain equal number of terms $V$ and $V^*$,  since these are the only summands that contain invariants. For convenience let $\bullet$ denote $V$, $\circ$ -- $V^*$, \scalebox{0.6}{$\square$} -- $(\mathbb{1}^n)^*$, $\blacksquare$ --$\mathbb{1}^n$. In this notation $(V\oplus\mathbb{1}^n)^*\otimes(V\oplus\mathbb{1}^n) = \circ\bullet + \circ~\blacksquare +$ \scalebox{0.6}{$\square~$}$\bullet +$\scalebox{0.6}{$\square~~$}$\blacksquare$. We are interested in strings of symbols of length $2k$ containing equal number of black ($\bullet$) and white ($\circ$) circles. A priori white figures (\scalebox{0.6}{$\square$} and $\circ$) and black figures ($\blacksquare, \bullet$)  come in pairs, white before black.  We are interested in these strings up to permutations of such pairs (because $S_k$ acts on $((V\oplus \mathbb{1}^n)^*\otimes(V\oplus \mathbb{1}^n))^{\otimes k}$ permuting pairs and we are interested in the invariants inside the symmetric algebra $S((V\oplus \mathbb{1}^n)^*\otimes(V\oplus \mathbb{1}^n))$). Note that all the pairs of the form \scalebox{0.6}{$\square~~$}$\blacksquare$ are  factors that come from $S(\gl_n)$, so we can forget them. All the invariants come from coevaluations of $V$ and $V^*$, therefore, if our string (of length $2k$) doesn't contain any squares, then the invariants in it are monomials of the form $b_{k_1}...b_{k_l}$  of degree $k$ (modulo some terms from $S(\gl_n)$). Indeed, we start with some pair $\circ~\bullet$ and $\bullet$ should come from coevaluation, so there must be a $\circ$ to pair it with; if it is the first one, we're done, because $\circ\smile\bullet$ is just $b_1$; if not - there must be another pair of $\circ~\bullet$ and so on until we get the cycle as in (*)). Thus,  we have got a factor of the form $b_{k_1}$, where $k_1$ is the number of pairs ($\circ~\bullet)$ involved. Then we proceed by looking at any other pair $\circ~\bullet$ (that is not involved in $b_{k_1}$) and repeat the algorithm. 

If there are some squares in the string then \scalebox{0.6}{$\square$} and $\blacksquare$ come in equal numbers, so we can assume that there is a pair of the form \scalebox{0.6}{$\square$}$~\bullet$ (because we forget the pairs of the form \scalebox{0.6}{$\square~~$}$\blacksquare$). Take it to be the first pair. So $\bullet$ must come from coevaluation, so there is a  $\circ$ to pair it with. We choose the second pair  $\circ~*$ containing the aforementioned $\circ$. Now if the second factor is $\bullet$ then we again look for a $\circ$ to pair it with, and so on until we reach a square $\blacksquare$. Clearly, this gives us a factor of the form

\begin{tikzpicture}[line width=1pt, scale=.75]
	\foreach\x in {2,5}{
		\node[bV] (t\x) at (\x,0){};}
	\foreach\x in {4,9}{
	    \node[cV] (t\x) at (\x,0){};}
	\node[draw, fill =black,  inner sep=0pt, minimum size=4pt] at (10,0){};
	\node[draw, fill =white,  inner sep=0pt, minimum size=4pt] at (1,0){};
	\foreach\x in {0.7,3.7,8.7}{
	\node at (\x,0){(};}
	\foreach\x in {2.4,5.4,10.4}{
	    \node at (\x,0){$)$};}
	\foreach\x in {3,6,8}{
	    \node at (\x,0){$\otimes$};}
	\node at (6.9,0){$\ldots$};
	\draw [bend left] (t2) to (t4) (t5) to (6.6,0.2) (7.4,0.2) to (t9);
	\node at (1.5,0){$\otimes$};
	\node at (9.5,0) {$\otimes$};
\end{tikzpicture}

And this is just $a_i$. So, we have proved  the surjectivity.

\subsection{Injectivity:}

~~~~~Consider the maps from Olshanski's construction:

\xymatrix{
&Y_n\otimes A_0\ar[dl]_{\phi_\infty}\ar@{.>}[dr]^{\phi_{N+n}}\\
A_n\ar@{->>}[rr]&&A_n(N+n)
}

Since $\phi_\infty$ is an embedding and  $F^m(Y_n\otimes A_0)$ is finite dimensional, there exists $N_0$ s.t. $\phi_{N+n}$ restricted to $F^m(Y_n\otimes A_0)$ is also an embedding for any $N>N_0$.

It follows from our construction that the functor $Ev_N: \Rep(GL_t)|_{t=N}\to \mathbf{Rep}~GL(N)$ takes the Yangian $Y_t$ to $Y(\gl_N)$. Moreover, $Ev_N(Res_n(Y_{t+n}))=Y(\gl_{N+n})$ and the subalgebra $\omega_t(Y_n)$ is mapped to the subalgebra $\omega_N(Y_n)\subset Y_{N+n}$. The invariants $\Res_n(U(\gl_{t+n}))^{GL_t}$ are mapped to the invariants $U(\gl_{N+n})^{\gl_n}$. Note that the restriction of the monoidal functor $Ev_N$ to the invariants is just the map of algebras and we have the following commutative diagram: 

\xymatrix{
&Y_n\otimes A_0\ar[ld]_{\phi}\ar[rd]^{\phi_{N+n}}\\
\Res_n(U(\gl_{t+n}))^{GL_t}\ar[rr]^{Ev_N}&&U(\gl_{N+n})^{\gl_N}
}

Let $J(t)$ be the kernel of the map $\phi(t): Y_n\otimes A_0 \to \Res_n(U(\gl_{t+n}))^{GL_t}.$ Then the intersection $J(t)\cap F^m(Y_n\otimes A_0)$ is a subspace that algebraically depends on $t$. Since for big enough $N$ $\phi_{N+n}$ restricted to $F^m(Y_n\otimes A_0)$ is injective,  $\phi(N)$ restricted to this subspace is necessary injective. Therefore,  $J(N)\cap F^m(Y_n\otimes A_0) = 0$ for any $N>N_0$. Hence,  $J(t)\cap F^m(Y_n\otimes A_0) = 0$ for any $t$ in some Zariski open set in $\mathbb C$ (i.e. for all $t$, except for a finite number of points). Since $J(t) = \bigcup_m (J(t)\cap F^m(Y_n\otimes A_0))$, the kernel is zero for Weyl generic $t$.

It proves the theorem.

\section{Appendix: Ultraproduct construction and PBW theorem for the Yangian.}

We follow the work of Nate Harman \cite{H} in this section. Our goal is to prove PBW for the Yangian $Y_t$ defined in the abelian envelope of $\Rep(GL_t)$. 

Given a non-principal ultrafilter $\mathcal U$ on $\mathbb N$ and categories $\mathcal C_1,\mathcal C_2,...$ one can define the ultraproduct of these categories $\hat{\mathcal C}_{\mathcal U}:=\prod_{\mathcal U} \mathcal C_i$. The objects in this category are represented by strings of objects $X_1, X_2,...,$ s.t. $X_i\in Ob(\mathcal C_i)$ and two such strings define the same object of $\hat{\mathcal C}_{\mathcal U}$ if they agree on a set belonging to $\mathcal U$. Morphisms are represented by pointwise morphisms between such strings with a similar equivalence relation.

Let $p_1, p_2,...$ be a sequence of prime numbers tending to infinity. Fix an isomorphism $f:\prod_{\mathcal U} \overline{\mathbb F}_{p_n} \to \mathbb C$. Let $f^{-1}(t)$ be represented by $(\overline t_1, \overline t_2,...),$ with $ \overline t_n\in\overline{\mathbb F}_{p_n}$. And let $t_1, t_2,...$ be a sequence of natural numbers tending to infinity, s.t. $t_n~mod~ p_n = \overline t_n$. Let $\mathcal C_n = \mathbf{Rep}~GL(t_n, \overline{\mathbb F}_{p_n})$ and let $V_n\in Ob(\mathcal C_n)$ be the fundamental $t_n$-dimensional representation. Then we have the following statement:

\begin{theorem} Let $V$ be an object in $\hat{\mathcal C}_{\mathcal U}$ represented by the string $V_1,V_2,...$. And let $\mathcal C_{\mathcal U}$ be the subcategory generated by $V$ under the operations of taking duals, tensor products, direct sums, and direct summands and $\mathcal C^{ab}_{\mathcal U}$ - the same but also allowing taking subquotients. Then $\mathcal C_{\mathcal U}$ is equivalent to $\Rep(GL_t)$ and $\mathcal C^{ab}_{\mathcal U}$ - to its abelian envelope.

\end{theorem}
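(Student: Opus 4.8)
The plan is to deduce Theorem 5.1 from Harman's ultraproduct description of $\Rep(GL_t)$ by a direct categorical comparison, so the new content here is really just checking that the construction over $\overline{\mathbb F}_{p_n}$ gives the right answer when one passes to the characteristic-zero field $\mathbb C$ via the fixed isomorphism $f$. First I would recall the key ``transfer'' principle for ultraproducts of categories: a string of objects, morphisms, or diagrams that satisfies a first-order property (finite dimensionality of a given $\Hom$ space, vanishing of a composition, exactness of a short sequence, etc.) in $\mathcal C_n$ for a set of indices belonging to $\mathcal U$ yields an object/morphism/diagram of $\hat{\mathcal C}_{\mathcal U}$ with the same property. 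The essential input is that for $n$ large (a set in $\mathcal U$, since $t_n\to\infty$) the representation theory of $GL(t_n,\overline{\mathbb F}_{p_n})$ on tensor powers of $V_n$ and its dual agrees, as far as the combinatorics of $\Hom$ spaces between the objects $[k,l]$ is concerned, with the generic ($t\notin\mathbb Z$) behaviour: the dimension of $\Hom([k,l],[p,q])$ stabilizes to the rank of the walled Brauer algebra $B_{k,l}(t)$ when $p_n$ and $t_n$ are both large relative to $k,l$.

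Next I would construct the comparison functor. Since $V\in\hat{\mathcal C}_{\mathcal U}$ is a rigid object whose categorical dimension equals $f((t_n \bmod p_n)_n)=t$ (the trace of the identity on $V_n$ is $t_n$, and $f^{-1}(t)=(\bar t_n)_n$), the universal property of $\widetilde{\Rep}(GL_t)$ quoted after Definition 2.1 gives a symmetric monoidal functor $\widetilde{\Rep}(GL_t)\to\hat{\mathcal C}_{\mathcal U}$ sending the generator to $V$; its essential image is exactly $\mathcal C_{\mathcal U}$ after Karoubi-completing. I would then show this functor is fully faithful: on the objects $[k,l]$ it sends the walled Brauer algebra $B_{k,l}(t)$ to $\End_{\hat{\mathcal C}_{\mathcal U}}(V^{\otimes k}\otimes V^{*\otimes l})$, which is the ultraproduct of the algebras $\End_{\mathcal C_n}(V_n^{\otimes k}\otimes V_n^{*\otimes l})$; for $n$ in a set of $\mathcal U$ these are all isomorphic to $B_{k,l}(t_n)$ acting with the ``stable'' kernel, and taking the ultraproduct of $\mathbb C$-dimensions (using $t\notin\overline{\mathbb Q}$, so $t_n\neq$ any fixed integer for $n$ large) recovers $\dim_{\mathbb C} B_{k,l}(t)$. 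Because $\Rep(GL_t)$ is semisimple for $t\notin\mathbb Z$, fully faithful plus the fact that $\mathcal C_{\mathcal U}$ is generated by $V$ under the listed operations upgrades to an equivalence $\Rep(GL_t)\xrightarrow{\sim}\mathcal C_{\mathcal U}$.

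Finally, for the abelian statement I would invoke the characterization of the abelian envelope: $\mathcal C^{ab}_{\mathcal U}$ is an abelian symmetric tensor category containing $\mathcal C_{\mathcal U}\simeq\Rep(GL_t)$ as a full monoidal subcategory, generated by it under subquotients, and with the same $\Hom$ spaces between objects of $\mathcal C_{\mathcal U}$; by the universal property of the abelian envelope established in \cite{H} (and \cite{D1}), any such category receives a canonical functor from $\Rep(GL_t)^{ab}$, and one checks it is an equivalence by comparing simple objects — the simples of $\mathcal C^{ab}_{\mathcal U}$ are ultraproducts of the irreducible $GL(t_n,\overline{\mathbb F}_{p_n})$-modules with highest weights in the stable range, which biject with the indexing set (pairs of partitions) for the simples of $\Rep(GL_t)^{ab}$. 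The main obstacle I anticipate is precisely this last identification of simple objects and the verification that no ``extra'' subquotients appear in the ultraproduct: one must rule out that reductions mod $p_n$ create decomposition-theoretic phenomena that survive to $\hat{\mathcal C}_{\mathcal U}$, which is handled by the observation that for weights in a fixed stable range and $p_n\to\infty$ the relevant modules are irreducible and the tensor-product multiplicities match the characteristic-zero (indeed generic-$t$) ones on a set belonging to $\mathcal U$.
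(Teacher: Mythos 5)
The paper does not actually supply its own proof of this statement: after stating Theorem 5.1 it writes ``We refer the reader to the original article for more detail,'' deferring entirely to Harman's paper \cite{H}. So there is no internal argument to compare against; what follows is an assessment of the sketch on its own terms.

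Your outline captures the essential structure of Harman's argument: use the universal property of $\widetilde{\Rep}(GL_t)$ to produce a symmetric monoidal functor to $\hat{\mathcal C}_{\mathcal U}$ hitting $V$ (noting that $\dim V = f((\bar t_n)_n) = t$), and then obtain full faithfulness from the stabilization of $\Hom$-spaces --- that the natural surjection $B_{k,l}(t_n)\to\End_{\mathcal C_n}([k,l])$ is an isomorphism once $t_n\geq k+l$ and $p_n$ is large, and that ultraproducts of these endomorphism algebras recover $B_{k,l}(t)$ over $\mathbb C$. That, plus Karoubi completion and the generation hypothesis, gives $\mathcal C_{\mathcal U}\simeq\Rep(GL_t)$. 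This is the right argument.

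For the abelian-envelope half, however, your proposal is more elaborate than it needs to be, and the universal-property step as you phrase it is slightly off: the abelian envelope is not universal for ``abelian tensor categories containing $\Rep(GL_t)$ as a full subcategory'' in a way that would automatically produce the comparison functor $\Rep(GL_t)^{ab}\to\mathcal C^{ab}_{\mathcal U}$. The cleaner route exploits that $t$ is transcendental here (the $t_n\to\infty$ force this): then $\Rep(GL_t)$ is semisimple, hence already abelian, and $\Rep(GL_t)^{ab}=\Rep(GL_t)$. So the claim reduces to $\mathcal C^{ab}_{\mathcal U}=\mathcal C_{\mathcal U}$, i.e.\ that passing to subquotients in $\hat{\mathcal C}_{\mathcal U}$ produces nothing new. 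This is exactly the point you raise at the end --- that modular degenerations in the $\mathcal C_n$ do not survive the ultraproduct --- and one can argue it by showing that, on a set in $\mathcal U$, the objects $V_n^{\otimes k}\otimes V_n^{*\otimes l}$ are semisimple in $\mathcal C_n$ (since $p_n$ and $t_n$ are both large relative to $k,l$), so every subquotient is already a direct summand and lies in $\mathcal C_{\mathcal U}$. Your comparison of indexing sets of simples is correct but is the argument one would need in Harman's more general setting, where the $t_n$ are held constant (realizing integer $t$, where the envelope is genuinely larger); for the present transcendental case it can be bypassed.

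One more small point worth making explicit: the isomorphism $\End_{\mathcal C_n}([k,l])\cong B_{k,l}(t_n)$ requires both fundamental theorems of invariant theory for $GL$ over $\overline{\mathbb F}_{p_n}$, and the injectivity half (second fundamental theorem for the walled Brauer algebra) in positive characteristic is the less standard ingredient; it holds for $t_n\geq k+l$ with $p_n$ large, which is what the ultrafilter buys you, but it should be cited rather than asserted.
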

We refer the reader to the original article for more detail.

What is important for us is that the proof of PBW for the Yangian $Y_n$ (given for example in Molev's book \cite{M}) works in any characteristic. The Yangian $Y_t$ (introduced in Def.2.5.) is then well defined in $\mathrm{Ind}(\mathcal C^{ab}_{\mathcal U})$ and represented by the string $Y_{t_1}, Y_{t_2},...$, where $Y_{t_n}$ is the classical Yangian $Y(\gl_{t_n})$ over the field 
$\overline{\mathbb F}_{p_n}$. Since for all of them we have an isomorphism $\mathrm{gr}(Y_{t_n})=S(\bigoplus_k (V_n^*\otimes V_n)_k)$, the same must be true for $Y_t$ and thus we have proved the following theorem.

\begin{theorem}
Poincaré–Birkhoff–Witt theorem holds for the Yangian $Y_t$ defined in the category $\mathrm{Ind}(\mathcal C^{ab}_{\mathcal U})$, in other words, there is an isomorphism of filtered $\mathrm{Ind}$-objects $\mathrm{gr}(Y_t)=S(Y_t^{gen})$.
\end{theorem}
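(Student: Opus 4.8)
The plan is to obtain PBW for $Y_t$ by transferring the classical PBW theorem through the ultraproduct construction of Theorem 5.1. The key point, which the excerpt already flags, is that the standard proof of PBW for the Yangian $Y(\gl_m)$ (for instance the argument via the RTT-presentation in Molev's book \cite{M}) nowhere uses that the ground field has characteristic zero: it produces an explicit spanning set of ordered monomials in the generators $t_{ij}^{(\ell)}$ and shows linear independence by a deformation/degeneration argument that is purely combinatorial. So for each $n$ we have, over $\overline{\mathbb F}_{p_n}$, a canonical isomorphism of filtered objects $\mathrm{gr}\,Y(\gl_{t_n}) \xrightarrow{\ \sim\ } S\big(\bigoplus_k (V_n^*\otimes V_n)_k\big)$ in $\mathrm{Ind}(\mathbf{Rep}\,GL(t_n,\overline{\mathbb F}_{p_n}))$, given by the symmetrization map.

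\medskip
First I would check that the object of $\mathrm{Ind}(\mathcal C^{ab}_{\mathcal U})$ represented by the string $(Y_{t_1}, Y_{t_2}, \ldots)$ really is the Yangian $Y_t$ of Definition 2.5. This is a matter of unwinding definitions: the generating object $Y^{gen} = \bigoplus_i (V^*\otimes V)_i$ is represented by the string of the classical generating spaces, the maps $a_i$, $R(u)$ and the map $f$ defining the ideal $I$ are each built functorially out of coevaluation, evaluation and the permutation, hence are represented pointwise by the corresponding classical maps; since ultraproducts of abelian categories are exact and commute with the formation of cokernels, the quotient $\tilde Y/I$ is represented by the string $(Y_{t_n})_n$. (One also uses here that Theorem 5.2's hypothesis — that Lemma 2.3 applies — is exactly what makes $Y_t$ an honest $\mathrm{Ind}$-object rather than living only in the abelian envelope; but for the bookkeeping it is cleanest to work in $\mathrm{Ind}(\mathcal C^{ab}_{\mathcal U})$ throughout, which is what the statement asks for.)

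\medskip
Next I would pass to associated graded. The filtration on $Y_t$ is inherited from the grading on $\tilde Y = T(Y^{gen})$, and filtrations, quotients, and the formation of $\mathrm{gr}$ are all computed pointwise in the ultraproduct (a string represents a filtered $\mathrm{Ind}$-object by giving a compatible string of filtered $\mathrm{Ind}$-objects, and on a set in $\mathcal U$ these agree). Hence $\mathrm{gr}\,Y_t$ is represented by the string $(\mathrm{gr}\,Y_{t_n})_n$, while $S(Y_t^{gen})$ is represented by $(S(\bigoplus_k (V_n^*\otimes V_n)_k))_n$. The classical symmetrization isomorphisms $\mathrm{gr}\,Y_{t_n} \xrightarrow{\sim} S(\bigoplus_k (V_n^*\otimes V_n)_k)$ are natural and defined uniformly in $n$, so they assemble into a morphism of the two strings; being an isomorphism on every index, it represents an isomorphism in $\mathrm{Ind}(\mathcal C^{ab}_{\mathcal U})$. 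Restricting along the equivalence $\mathcal C^{ab}_{\mathcal U} \simeq \Rep(GL_t)^{ab}$ of Theorem 5.1 gives the desired isomorphism $\mathrm{gr}(Y_t) \cong S(Y_t^{gen})$.

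\medskip
I expect the main obstacle to be the first step — verifying that the ultraproduct string $(Y_{t_n})$ genuinely computes the categorically-defined $Y_t$, i.e. that the functor "ultraproduct" commutes with all the categorical constructions used in Definition 2.5 (tensor algebra of an $\mathrm{Ind}$-object, the ideal generated by the image of a morphism, the resulting filtered quotient). The subtlety is that $\tilde Y$ and $Y_t$ are $\mathrm{Ind}$-objects, not objects, so one must check that the $\mathrm{Ind}$-completion interacts well with the ultraproduct; here the finiteness built into Lemma 2.3 — each $F^i Y_t$ lies in the Karoubian subcategory generated by tensor powers of $V$, hence is an actual object — is what saves us, since on honest objects the ultraproduct is exact and monoidal, and one applies it filtration-degree by filtration-degree. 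Everything after that is formal: naturality and uniformity of the classical symmetrization map do the rest.
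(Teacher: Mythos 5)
Your proposal follows the same route as the paper: observe that the classical PBW proof for $Y(\gl_m)$ is characteristic-free, realize $Y_t$ as the ultraproduct of the classical Yangians $Y(\gl_{t_n})$ over $\overline{\mathbb F}_{p_n}$, and transfer the symmetrization isomorphism through the ultraproduct. The paper's own proof is only a few sentences and leaves all the bookkeeping you describe (that the string $(Y_{t_n})_n$ genuinely represents the categorically defined $Y_t$, that $\mathrm{gr}$ and the filtration are computed pointwise, that the symmetrization isomorphisms are uniform in $n$) implicit, so what you add is genuine due diligence rather than a different argument.

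One thing to repair: your invocation of Lemma~2.3 to conclude that each $F^i Y_t$ is an honest object is circular in this context. Lemma~2.3 takes as \emph{hypothesis} that $\mathrm{gr}\,A = SV$, which is precisely the PBW statement you are trying to prove; the paper itself applies Lemma~2.3 to $\tilde Y/I$ only \emph{after} citing Theorem~5.2. Fortunately you do not need it here. The filtration on $\tilde Y = T(Y^{gen})$ assigns degree $\ell$ to $(V^*\otimes V)_\ell$, so each $F^m\tilde Y$ is a \emph{finite} direct sum of tensor powers and hence an honest object already; $F^m Y_t$ is then a quotient of $F^m\tilde Y$ inside the abelian category $\mathcal C^{ab}_{\mathcal U}$, hence also an honest object. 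That is all the finiteness required to apply the ultraproduct filtration-degree by filtration-degree, and it is independent of PBW. (Lemma~2.3 only enters afterwards, to promote $Y_t$ from an $\mathrm{Ind}$-object of the abelian envelope to an $\mathrm{Ind}$-object of $\Rep(GL_t)$ proper.) With that substitution your argument is complete and matches the paper's.
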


\begin{bibdiv}
\begin{biblist}

\bib{CW}{article}{
title={Deligne's category Rep($GL_\delta$) and representations of general linear supergroups},
author={J. Comes},
author={B. Wilson},
journal={Representation Theory},
volume={16},
date={2012},
pages={568--609 }
}

\bib{D1}{article}{

title={La Categorie des Representations du Groupe Symetrique $S_t$, lorsque t n'est pas un Entier Naturel},
author={P. Deligne},
journal={Algebraic groups and homogeneous spaces},
date={2007},
pages={209--273}
}

\bib{D2}{article}{

title={Categories tannakiennes},
author={P. Deligne },
journal={The Grothendieck Festschrift, Vol. II, Progr. Math},
volume={87},
date={1990},
pages={111--195}
}

\bib{D3}{article}{

title={Categories tensorielles},
author={P. Deligne},
journal={Moscow Math. J},
volume={2},
date={2002},
pages={no.2, 227--248}
}

\bib{DM}{article}{

title={Tannakian categories},
author={P. Deligne},
author={J. Milne},
journal={Lecture notes in mathematics},
volume={900},
date={1982},
pages={http://www.jmilne.org/math/xnotes/tc.pdf}
}

\bib{E2}{article}{

title={Representation theory in complex rank, II},
author={P. Etingof },
journal={Advances in Mathematics},
volume={300}, 
pages={473--504},
date={2016}
}

\bib{H}{article}{

title={Deligne categories as limits in rank and
characteristic},
author={N. Harman},
journal={arXiv:1601.03426}
}
\bib{M}{book}{
title={Yangians and classical Lie algebras},
author={A.Molev},
date={2009},
pages={374--395}
}

\bib{MO}{article}{
title={Centralizer construction for twisted Yangians},
author={A. Molev},
author={G. Olshanski},
journal={Selecta Mathematica },
volume={6:269},
pages={https://doi.org/10.1007/PL00001390},
date={2000}
}

\bib{O1}{article}{

title={Representations of infinite-dimensional classical groups, limits of enveloping algebras, and Yangians},
author={G. Olshanski },
journal={Topics in Representation Theory, Advances in Soviet Math},
volume={2},
date={1991},
pages={1--66}
}

\end{biblist}

\end{bibdiv}

\end{document}